\documentclass{article}

\usepackage[english]{babel}
\usepackage{amssymb}
\usepackage{amsmath}
\usepackage{hyperref}
\usepackage{enumerate}
\usepackage{amsthm}
\usepackage{amsfonts}
\usepackage{color}
\usepackage[curve,color]{xypic}
\usepackage{diagbox}
\usepackage{rotating}

\usepackage{MnSymbol}

\DeclareMathSymbol{\mlq}{\mathord}{operators}{``}
\DeclareMathSymbol{\mrq}{\mathord}{operators}{`'}

\title{A holomorphic $(2,2)$-Theorem for Abelian varieties of CM-type}
\date{31.8.2026}
\author{Fritz H\"ormann\footnote{This work is funded by donations. If you found them interesting, please consider a contribution: \url{https://donorbox.org/fritz-hormann-independent-researcher-in-mathematics}.}}

\usepackage[numbers,sort&compress]{natbib}

\newtheorem{SATZ}{Theorem}[section]

\newtheorem{LEMMA}[SATZ]{Lemma}
\newtheorem{KEYLEMMA}[SATZ]{Key Lemma}
\newtheorem{DEF}[SATZ]{Definition}
\newtheorem{PROP}[SATZ]{Proposition}

\newtheorem{FRAGE}[SATZ]{Question}

\newtheorem{VERMUTUNG}[SATZ]{Conjecture}

\newtheorem{BEM}[SATZ]{Remark}

\newtheoremstyle{bare}        
  {}            
  {}            
  {\normalfont}                 
  {}                            
  {\mdseries\scshape}                   
  {}                            
  {.0em}                           
  {\thmnumber{#2}#1. \thmnote{\normalfont\textsc{(#3)}} } 

\theoremstyle{bare}
\newtheorem{PAR}[SATZ]{}

\newcommand{\comment}[1]{}

\newcommand{\Mat}[1]{ \left(\begin{matrix} #1 \end{matrix} \right) }

\newcommand{\R}{ \mathbb{R} }
\newcommand{\C}{ \mathbb{C} }
\newcommand{\Q}{ \mathbb{Q} }

\newcommand{\Z}{ \mathbb{Z} }

\newcommand{\OOO}{\text{\footnotesize$\mathcal{O}$}}
\newcommand{\OO}{ {\cal O} }

\DeclareMathOperator{\id}{id}

\DeclareMathOperator{\Hom}{Hom}

\DeclareMathOperator{\Tot}{Tot}

\DeclareMathOperator{\im}{im}
\DeclareMathOperator{\gr}{gr}

\DeclareFontFamily{U}{wncy}{}
    \DeclareFontShape{U}{wncy}{m}{n}{<->wncyr10}{}
    \DeclareSymbolFont{mcy}{U}{wncy}{m}{n}
    \DeclareMathSymbol{\Sha}{\mathord}{mcy}{"58} 

\begin{document}

\maketitle

\comment{
\section{Notation}

Let $L|K$ be a field extension, we denote by
\[ C^\bullet(L|K) \]
Amitsur's complex.
}

{\footnotesize  {\em 2020 Mathematics Subject Classification:} 14C30, 14K22  }

{\footnotesize  {\em Keywords:}  Hodge conjecture, Complex tori, Abelian varieties, Amitsur's cosimplicial ring, analytic Milnor $K$-sheaves }

\section*{Abstract}

For complex tori with algebraic normalized period matrix, in particular for Abelian varieties of CM-type, the map from the
second cohomology group of the analytic sheaf of the second Milnor $K$-groups
to the rational cohomology classes of Hodge type $(2,2)$ is surjective (up to denominators). 
 
\section{Introduction}

Let $X$ be a complex projective variety, $n$ a non-negative integer, $\mathrm{CH}^{n}(X)$ its Chow group of codimension $n$ cycles, 
and let $H^{2n}(X_{\mathrm{an}}, \C) = \bigoplus_{p+q=2n} H^{p,q}$ be the Hodge decomposition of its cohomology in degree $2n$. 
We have a commutative diagram 
\[ \xymatrix{  H^{n}(X, \mathcal{K}_n^M)_{\Q} \ar[d] \ar[r]^{\sim} &  \mathrm{CH}^{n}(X)_{\Q} \ar[d]^{\mathrm{cl}^n} \\
H^{n}(X_{\mathrm{an}}, \mathcal{K}_n^{M,\mathrm{an}})_{\Q}  \ar[r]^-{\mathrm{dlog}} & H^{n,n} \cap H^{2n}(X_{\mathrm{an}}, \Q)  &  }   \]
where the top horizontal isomorphism is ``Bloch's formula'' \cite[\S 7, Theorem 5.19]{Qui73} and the morphism denoted $\mathrm{dlog}$ is induced by 
 
 \[ (\frac{1}{2\pi i})^n \mathrm{dlog} \wedge \cdots \wedge \mathrm{dlog}: \Lambda^n_{\Z} \OO_X^* \to \Omega^n \] 
 and we have the famous
\begin{VERMUTUNG}[Hodge $(n,n)$-conjecture]
The map $\mathrm{cl}^n$ is surjective.
 \end{VERMUTUNG}
 The sheaf $\mathcal{K}_n^{M,\mathrm{an}}$ is the analytic sheafification of the pre-sheaf $U \mapsto K_n^M(\OOO_U)$ and figures for example in
 Bloch's articles \cite{Blo74, Blo77}\footnote{Caution: In \cite{Blo74}, Bloch uses the notation $\mathcal{K}_2^{M,\mathrm{an}}$ for a certain completion of the present sheaf. }.  In this article, we show the following result (cf.\@ page~\pageref{SATZ22PROOF} for the proof), which applies in particular to all Abelian varieties of CM-type: 
\begin{SATZ}[Holomorphic (2,2)-Theorem] \label{SATZ22}
Let $X$ be a complex torus with algebraic (normalized) period matrix. 
Then the morphism 
\[ \mathrm{dlog}: H^{2}(X, \mathcal{K}_2^{M,\mathrm{an}})_{\Q}  \to H^{2,2} \cap H^{4}(X, \Q)   \]
is surjective. 
\end{SATZ}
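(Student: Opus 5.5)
We would attack Theorem~\ref{SATZ22} by first reducing the target to explicit linear algebra. Write $X = V/\Lambda$ with $V=\C^g$ and $\Lambda = \Z^{2g}$ embedded by a normalized period matrix $(\mathbf{1}, \tau)$ whose entries are algebraic numbers. Then $H^4(X,\Q)=\Lambda^4 \Hom(\Lambda,\Q)$. The Hodge classes of type $(2,2)$ are the rational $4$-forms lying in $\Lambda^2 \overline{V}^* \otimes \Lambda^2 V^*$. Because $\tau$ is algebraic, this condition is a system of linear equations with coefficients in a number field $L$. We may take $L$ Galois over $\Q$, containing the entries of $\tau$, and stable under complex conjugation.

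Our candidate source of classes in $H^2(X,\mathcal{K}_2^{M,\mathrm{an}})$ consists of Čech-type cocycles built from products of \emph{global exponential-type functions}. For a linear form $\ell$ on $V$, the function $e^{\ell}$ is a nowhere-vanishing holomorphic function on $V$. Pulling back along the covering $V\to X$, a symbol $\{f_{\gamma}, g_{\delta}\}$ indexed by pairs of lattice elements gives a $2$-cocycle of the group $\Lambda$ with values in $K_2^M(\OO^*(V))$, that is, a class in $H^2(\Lambda, \mathcal{K}_2^{M,\mathrm{an}}(V))$. Here each $f_\gamma$ and $g_\delta$ is an exponential of a linear form in $\gamma$ or $\delta$. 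Via the Hochschild--Serre spectral sequence for $V\to X$, which is available because $V$ is Stein and contractible, we map such group cohomology classes to $H^2(X,\mathcal{K}_2^{M,\mathrm{an}})$. We then compute $\mathrm{dlog}$ of a symbol $\{e^{\ell_1}, e^{\ell_2}\}$. It is $\frac{1}{(2\pi i)^2} d\ell_1\wedge d\ell_2$, a $(2,0)$-form. Its image under the edge map combines this with the $\overline{V}^*$ parts coming from the cocycle's dependence on $\gamma,\delta\in\Lambda$. Explicitly, a cocycle $(\gamma,\delta)\mapsto \{e^{2\pi i\, a(\gamma) \ell_1}, e^{2\pi i\, b(\delta)\ell_2}\}$ should produce the class $a\wedge b\wedge d\ell_1\wedge d\ell_2$, suitably antisymmetrised.

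The key algebraic step is to show that the $\Q$-span of classes of the form $\alpha\wedge\beta\wedge\omega$ that arise this way contains all rational $(2,2)$ classes. Here $\alpha,\beta$ range over rational integral $1$-forms on $\Lambda$ and $\omega$ over holomorphic $(2,0)$-forms realisable by symbols. The $\Z$-bilinearity of Milnor symbols lets us only use $\Q$-linear combinations of the $\ell_i$. However, by choosing functions $e^{c\ell}$ with $c\in L$ we obtain all $L$-linear combinations of $d\ell_1\wedge d\ell_2$. For this we need to realise multiplication by $c$ inside $K_2$: the symbol $\{e^{c\ell_1}, e^{\ell_2}\}$ versus $c\{e^{\ell_1},e^{\ell_2}\}$. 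This is where we expect to use Amitsur's complex for $L|\Q$. Galois descent, i.e.\ the exactness of $L\otimes_\Q L\otimes\cdots$, should show that an $L$-rational $(2,2)$ class that is $\Q$-rational is a trace (sum over $\mathrm{Gal}(L|\Q)$) of products expressible by our symbols. Up to denominators $[L:\Q]$, this gives surjectivity onto $H^{2,2}\cap H^4(X,\Q)$.

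We expect the main obstacle to be the second step. The $\mathrm{dlog}$-image of the symbol cocycles is naturally only $\Q$-linear in the forms, while Hodge classes are defined over $L$. We would try to first verify that the space of $(2,2)$ classes over $L$ is spanned by decomposable products $\alpha\wedge\beta\wedge d\ell_1\wedge d\ell_2$ with $\alpha,\beta,\ell_i$ defined over $L$. This holds because $\Lambda^2\overline{V}^*\otimes\Lambda^2V^*$ has a basis of this shape defined over $L$, by algebraicity of $\tau$. We would then use the Amitsur-complex exactness to descend Galois-invariant combinations to honest $\Q$-combinations of realisable symbols. Finally, we would track denominators to conclude after tensoring with $\Q$.
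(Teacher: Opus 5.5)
Your proposal has a genuine gap at the decisive step, and the explicit generators it relies on do not exist as stated. Take $a,b\in\Hom(\Lambda,\Z)$. By bilinearity your cochain becomes $(\gamma,\delta)\mapsto a(\gamma)b(\delta)\,m$ with $m=\{e^{2\pi i\ell_1},e^{2\pi i\ell_2}\}$, and its group coboundary is $a(\gamma_2)b(\gamma_3)(\gamma_1 m-m)$. Translation by $\gamma_1$ multiplies $e^{2\pi i\ell_k}$ by the constant $\lambda_k=e^{2\pi i\ell_k(\gamma_1)}$, so $\gamma_1 m-m=\{e^{2\pi i\ell_1},\lambda_2\}+\{\lambda_1,e^{2\pi i\ell_2}\}+\{\lambda_1,\lambda_2\}$. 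This is already nonzero in $\Lambda^2_\Z\OO^*(V)\otimes\Q\cong\Lambda^2_\Q(\OO(V)/\Q)$ unless $\ell_k(\Lambda)\subseteq\Q$. That is impossible for a nonzero $\C$-linear form, which would then be real-valued on $V=\Lambda\otimes\R$. So correction terms are forced, and it is they that determine the rational class. Relatedly, by the Lemma at the end of Section~\ref{SECTRES}, the image of any element of $H^2(X,\Lambda^2_\Z\OO_X^*)_\Q$ in $H^4(X,\C)$ is automatically rational and lies in $H^2(X,\Omega^2_X)$. It is not $a\wedge b\wedge \mathrm{d}\ell_1\wedge \mathrm{d}\ell_2$, which is in general neither rational nor of type $(2,2)$.

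More seriously, the descent step has no mechanism. The map $\mathrm{dlog}\colon H^2(X,\mathcal{K}_2^{M,\mathrm{an}})_\Q\to H^4(X,\Q)$ is only $\Q$-linear. Also, $\mathrm{Gal}(L|\Q)$ acts on $H^4(X,L)$ through the coefficients but not on the analytic source. So writing a rational Hodge class as $\frac{1}{[L:\Q]}\sum_\sigma\sigma(x)$ with $x=\sum_k c_k\,\mathrm{dlog}(s_k)$, $c_k\in L$, proves nothing unless each $\sigma(c_k\,\mathrm{dlog}(s_k))$ is itself in the image, which is the theorem. Replacing $\ell_1$ by $c\ell_1$ does not multiply a rational class by $c$ (the result would not be rational); it changes the cocycle together with its corrections. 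Exactness of the Amitsur complex only recovers $H^4(X,\Q)$ as a vector space (Lemma~\ref{LEMMACOHA}), whereas the statement is about filtrations.

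By Propositions~\ref{PROPK} and~\ref{PROPL}, what must be shown is that $H^4(X,\Q)\to H^4(X,\C)$ is \emph{strict} in the second step for the filtrations $F^\bullet_{\OOO_X}$ coming from the resolutions $K_X\to\OOO_X\to T^2_K\OOO_X\to\cdots$ ($K=\Q,\C$). The paper proceeds as follows:
\begin{itemize}
\item It models both filtrations by the column-filtered double complexes of $S^4_{C^\bullet(L|\Q)}A^\bullet(L|\Q)\to S^4_L A^\bullet(L)$.
\item It reduces to $L$ a Galois field of definition of $\mathrm{d}\colon V\to W$ (Proposition~\ref{PROPFOD}); this is where algebraicity enters.
\item It proves injectivity on $E^\infty_{i,4-i}$ for $i=0,1$. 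This uses the $E^0$-splitting by delta functions at $(1,\dots,1)$ in $C^j(L|\Q)\cong\Hom(G^j,L)$ (Lemma~\ref{LEMMASPLIT}). It also uses Key Lemma~\ref{KEYLEMMA}: homotopies $\kappa_l$ commuting with $\mathrm{d}_A^{(l')}$, plus the K\"unneth-type Lemma~\ref{LEMMA1}, which needs $C^1(L|\Q)$ to be a product of fields.
\end{itemize}
Note also that nothing in your outline uses $n=2$. If it worked, it would give the holomorphic $(n,n)$ statement for every $n$, whereas the paper's argument stops exactly at $i\le 1$. This is a sign that the real obstruction is being bypassed rather than resolved.
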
 
What is thus missing for a proof of the Hodge conjecture for Abelian varieties of CM-type? Hazama shows in \cite{Haz02} that for this class of varieties it suffices to show that 
\[ \mathrm{dlog}: H^{2}(X, \mathcal{K}_2^M)_{\Q}   \to H^{2,2} \cap H^{4}(X_{\mathrm{an}}, \Q)    \] 
is surjective (i.e.\@ the Hodge $(2,2)$-conjecture). 
Using Serre's GAGA \cite{Ser56}, this would follow from (essentially) Theorem~\ref{SATZ22} and another, purely analytic statement, as follows: 

From the proof of Theorem~\ref{SATZ22}, given in this article, follows that the morphism from the group cohomology for $\Gamma \cong \Z^{2g}$
\[ H^{2}(\Gamma, \Lambda^2_{\Z} \OOO^*(\C^g))_{\Q} \to H^{2,2} \cap H^{4}(X_{\mathrm{an}}, \Q)   \]
is surjective, where $\C^g \to X_{\mathrm{an}}$ is a chosen uniformization.  
The Hodge $(2,2)$-conjecture --- and thus the full Hodge conjecture --- for these varieties would follow, if one could show that the image of the boundary map in non-Abelian group cohomology
\[ \delta: H^1(\Gamma, \mathrm{SL}(\OOO(\C^g))) \to H^{2}(\Gamma, K_2(\OOO(\C^g)))  \]
induced by the central extension of groups with $\Gamma$-action
\[ \xymatrix{ 1 \ar[r] &  K_2(\OOO(\C^g)) \ar[r] &  \mathrm{St}(\OOO(\C^g)) \ar[r] &  \mathrm{SL}(\OOO(\C^g)) \ar[r] &  1 } \]
contains (up to rational factors) the image of $H^{2}(\Gamma, \Lambda^2_{\Z} \OOO^*(\C^g))$ \footnote{I do not know whether it is reasonable to conjecture that this is the case.
Bloch's and Suslin's works seem to suggest that at least a possible first obstruction of the form $H^2(\Gamma, K_2(\OOO(\C^g))) \to H^4(\Gamma, K_3(\OOO(\C^g)))$ should vanish on this image. }.
Indeed, by GAGA, the morphism $\delta$ factors via $H^{2}(X, \mathcal{K}_2^M)$.

The proof of Theorem~\ref{SATZ22} is quite elementary and I emphasize that --- while Bloch in \cite[Section 8]{Blo74} relates the Steinberg relations with 
the Leibniz rule for differential forms passing to a certain completion of $\mathcal{K}_2^{M, \mathrm{an}}$  ---
in this proof neither the Leibniz rule nor the Steinberg relations play any role whatsoever. 
This is perhaps not surprising because the Hodge structures on complex tori are described purely in terms of multi-linear algebra.
It seems likely that it can be generalized to arbitrary $n$ and to arbitrary complex tori but so far in the proof some tricks are used
that work only with the above mentioned restrictions. 

The proof of Theorem~\ref{SATZ22} is as follows:
Let $X$ be a complex torus. 
For each subfield  $K \subseteq \C$, we have a resolution of the constant sheaf
\begin{equation} \label{eqres} \xymatrix{ 0 \ar[r] & K_X \ar[r] & \OOO_X \ar[r]^-{} & \Lambda^2_K \OOO_X \ar[r]^-{} & \cdots  } \end{equation}
where $\OOO_X$ is the structure sheaf, 
and a hypercohomology spectral sequence
\[ H^i(X, \Lambda^j_K \OOO_X) \Rightarrow H^{i+j}(X, K) \]
inducing a filtration $F^\bullet_{\OOO_X} H^{n}(X, K)$. 

A holomorphic $(n,n)$-theorem, i.e.\@ the surjectivity of  
\[ \mathrm{dlog}: H^{n}(X, \mathcal{K}_n^{M,\mathrm{an}})_{\Q}  \to H^{n,n} \cap H^{2n}(X, \Q)   \]
would be a consequence of the following three properties:
\begin{enumerate}
\item We have $F^i_{\OOO_X}  H^{2n}(X, \Q) = \im(H^i(X, \Lambda_{\Z}^{2n-i} \OOO_X^*)_{\Q} \to H^{2n}(X, \Q))$.
\item The induced filtration on $H^{2n}(X, \C)$ is the Hodge filtration.
\item The morphism $H^{2n}(X, \Q) \to H^{2n}(X, \C)$ coming from the functoriality of the resolution (\ref{eqres}) in $K$ is {\em strict} w.r.t.\@ the
filtrations $F^\bullet_{\OOO_X}$. 
\end{enumerate}
Property~1.\@  is quite obvious (see Proposition~\ref{PROPK}) and true for all compact complex manifolds.
Property~2.\@  is true for all complex tori and all $n$ by elementary Hodge theory (see Proposition~\ref{PROPL}). 
To settle Property~3.\@ we replace the double complexes that compute the hypercohomology of the resolutions (\ref{eqres}) 
by purely-linear-algebra objects of the following form: Let $\mathrm{d}: V \to W$ be a surjection with kernel $W'$ of $\C$-vector spaces with $\Q$-structure $V_{\Q}$ on $V$ which 
describes $H^1(X, \Q)$ as Hodge structure of weight 1.

Then we form cosimplicial dg-modules with (simplex-wise) two terms
\begin{gather}\label{eqp} \vcenter{ \xymatrix{ W'_\Q \otimes_{\Q} \Q[\Delta^{\bullet}] \otimes_{\Q} C^{\bullet}(L|\Q)  \ar[d] \\  V_{\Q} \otimes_{\Q} C^{\bullet}(L|\Q) }   }
 \to \vcenter{ \xymatrix{ W' \otimes_{L} L[\Delta^{\bullet}]   \ar[d] \\  V }  }
 \end{gather}
 (maps specified in Definition~\ref{KEYDEF})
where $L$ is a field of definition of $\mathrm{d}$ and $ C^{\bullet}(L|\Q)$  is  Amitsur's cosimplicial ring with $C^n(L|\Q) = L \otimes_{\Q} \cdots \otimes_{\Q} L$, and $\Delta^{\bullet}$ is the
tautological cosimplicial set with $\Delta^n = \{0, \dots, n\}$, and where $W_{\Q}'$ is any $\Q$-form of $W'$ (not assuming that $\mathrm{d}$ is defined over $\Q$!).
Applying $S^{2n}_{C^{\bullet}(L|\Q)}$ (simplex-wise the symmetric tensor product of dg-modules over $L \otimes_{\Q} \cdots \otimes_{\Q} L$), and $S^{2n}_L$, respectively,  and then the complex of unnormalized cochains, we obtain
a double complex whose total cohomology\footnote{always filtered by means of the columns filtration} is $H^{2n}(X, \Q)$ with the filtration from Property~1.\@ above, and $H^{2n}(X, L)$ with the Hodge filtration, respectively. 
Thus it suffices to prove that (\ref{eqp}) induces a strict map of filtered vector spaces on the total cohomology or, what amounts to the same, that the map 
\begin{gather*} E^{\infty}_{i,2n-i}( S^{2n}_{C^{\bullet}(L|\Q)}  \left( \vcenter{ \xymatrix{ W'_\Q \otimes_{\Q} \Q[\Delta^{\bullet}] \otimes_{\Q} C^{\bullet}(L|\Q)  \ar[d] \\  V_{\Q} \otimes_{\Q} C^{\bullet}(L|\Q) }   } \right) )
 \to E^{\infty}_{i,2n-i}  (S^{2n}_{L}   \left( \vcenter{ \xymatrix{ W' \otimes_{L} L[\Delta^{\bullet}]   \ar[d] \\  V  }   } \right))
 \end{gather*}
 induced by (\ref{eqp}), is injective for appropriate $i$. 
This is a statement purely in linear algebra which is trivially true for $i=0$ \footnote{because the morphism of double complexes is an isomorphism in the first column} and it is proven in Proposition~\ref{KEYPROP} for $i=1$ under the assumption that $L|\Q$ is Galois. Theorem~\ref{SATZ22} follows.

\section{Resolutions with multilinear algebra}\label{SECTRES}

\begin{PROP}\label{PROPRES}
Let $X$ be a topological space and $K$ a field of characteristic 0. Let $\OOO$ be a ring sheaf with embedding $K_X \hookrightarrow \OOO$. There are  resolutions with morphisms $P$ and $A$ between them
\[  \xymatrix{ 0 \ar[r] & K_X \ar[r] \ar@{=}[d] & \OOO \ar[r] \ar@{=}[d] & T^2_K \OOO  \ar[r]  \ar@<2pt>[d]^{P_1}  &   T^3_K \OOO  \ar[r]  \ar@<2pt>[d]^{P_2}  & \cdots   \\
 0 \ar[r] & K_X \ar[r]  & \OOO \ar[r] & \Lambda^2_K \OOO  \ar[r] \ar@<2pt>[u]^{A_1}  & \Lambda^3_K \OOO  \ar[r] \ar@<2pt>[u]^{A_2}  & \cdots }   \]
 such that $P A = \id$. In particular, the hypercohomology spectral sequences associated with the respective resolutions, induce the same filtration on $H^i(X, K)$ which can be described as
 \[ F^i_{\OOO}H^n(X, K) := \im \left( H^{n-i}(X, \Lambda^{i}_K \frac{\OOO}{K_X} ) \to H^n(X, K) \right). \]
\end{PROP}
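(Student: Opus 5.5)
Both resolutions will be built as the sheafified Koszul/bar-type complexes attached to the $K$-linear map $K_X\to\OOO$, and then compared through the symmetrization and projection maps. For the top row, the differential $T^n_K\OOO\to T^{n+1}_K\OOO$ is the \emph{Amitsur} (bar) differential
\[ d(f_1\otimes\cdots\otimes f_n)=1\otimes f_1\otimes\cdots\otimes f_n+\sum_{j=1}^{n}(-1)^j f_1\otimes\cdots\otimes f_j\otimes 1\otimes f_{j+1}\otimes\cdots\otimes f_n+(-1)^{n+1}f_1\otimes\cdots\otimes f_n\otimes 1, \]
with $T^1=\OOO$ and $K_X\to\OOO$ the given embedding. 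For the bottom row, the differential is $d(\omega)=1\wedge\omega$, the Koszul differential of wedging with the unit section.

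Exactness of both complexes is stalkwise linear algebra, and I would do it via contracting homotopies after choosing a $K$-linear retraction $\OOO_x\to K$ on each stalk. For the exterior row, write $\OOO_x=K\oplus M$. Then $\Lambda^n(K\oplus M)=\Lambda^nM\oplus 1\wedge\Lambda^{n-1}M$, and $1\wedge-$ maps the second summand of degree $n-1$ isomorphically onto the second summand of degree $n$ and kills the rest. For the tensor row, the standard extra degeneracy $s(f_1\otimes\cdots\otimes f_n)=\epsilon(f_1)f_2\otimes\cdots\otimes f_n$, with $\epsilon$ the retraction, gives $ds+sd=\id$ in positive degrees. This also shows that in both complexes the image of $d$ in degree $n$ equals the image of $\Lambda^{n-1}_K(\OOO/K_X)$, respectively of the corresponding quotient. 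The chosen splittings are only stalkwise, but exactness is a stalk property, so that is harmless.

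Next come the maps. I would take $P$ to be the natural projection $T^n_K\OOO\to\Lambda^n_K\OOO$, rescaled by a suitable factor. I would take $A$ to be antisymmetrization, $A(f_1\wedge\cdots\wedge f_n)=\frac1{n!}\sum_\sigma \mathrm{sgn}(\sigma)f_{\sigma(1)}\otimes\cdots\otimes f_{\sigma(n)}$, where characteristic $0$ is needed. Then $PA=\id$ holds for the appropriate normalization. The real check is that both $P$ and $A$ are chain maps. For $P$, each insertion of $1$ in position $j$ becomes, after projecting, $\pm 1\wedge f_1\wedge\cdots$, and the alternating signs should combine to a multiple $c_n\cdot 1\wedge\omega$. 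I will have to verify that $c_n$ does not vanish, or adjust the normalizations degree by degree so that $P$ commutes with $d$ and $PA=\id$ persists. I expect this sign and normalization bookkeeping to be the main obstacle. For instance, $c_n$ may come out as $0$ in even degrees when using the naive projection, and in that case I would have to replace the bar differential by its alternating variant (inserting $1$ only at the front and end with appropriate signs) or rescale $P_n$ by $n$-dependent constants. For $A$, antisymmetrizing $1\wedge\omega$ produces the sum over all positions of $1$, which is exactly what the insertions in $d$ produce on an antisymmetric tensor.

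Finally, for the filtration I would use the column (stupid) filtration of the hypercohomology spectral sequences. Since $PA=\id$ with both maps chain maps over $\id_{K_X}$, the induced maps on $H^n(X,K)$ are the identity and respect the filtrations by truncation, so the filtrations coincide. To describe $F^i$, note that the truncation $\sigma_{\ge i}$ of the exterior resolution, shifted, is quasi-isomorphic to $\Lambda^{i}_K(\OOO/K_X)[-i]$, because exactness identifies the cocycles in degree $i$ with the image of $d$, which is $\Lambda^{i}_K(\OOO/K_X)$ embedded via $1\wedge$. The image of $H^n(X,\sigma_{\ge i})\to H^n(X,K)$ is then the image of $H^{n-i}(X,\Lambda^i_K\OOO/K_X)$, as claimed. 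The indexing shift between $\Lambda^i$ in the bottom row and the resolution degree has to be checked against the stated formula.
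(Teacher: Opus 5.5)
Your route is the paper's: the Amitsur (bar) complex contracted by $\varepsilon(f_1)f_2\otimes\cdots\otimes f_n$, the complex with differential $1\wedge-$, projection and antisymmetrization between them, and $F^i$ read off from the kernels $1\wedge\Lambda^i_K\OOO\cong\Lambda^i_K(\OOO/K_X)$. Your direct proof that the exterior row is exact, via $K\oplus M$, is the alternative the paper mentions; the paper instead gets exactness by realizing the exterior row as a retract of the tensor row.

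\textbf{The gap: the normalization of $P$ and $A$.} You flag this step yourself as the main obstacle, but you leave it open, and the normalization you actually write down fails. There is no sign cancellation. Moving the $1$ inserted at position $j$ to the front of a wedge costs $(-1)^j$, which exactly compensates the sign of the $j$-th coface. So, with $\pi_n\colon T^n_K\OOO\to\Lambda^n_K\OOO$ the quotient map,
\[ \pi_{n+1}\circ d=(n+1)\,(1\wedge-)\circ\pi_n, \qquad d\circ\mathrm{Alt}_n=\mathrm{Alt}_{n+1}\circ(1\wedge-), \]
where $\mathrm{Alt}_n$ is the \emph{unnormalized} antisymmetrization; the second identity is precisely your count of the positions of $1$. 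Hence $c_n=n+1\neq 0$, and the factorial has to sit on $P$, not on $A$.

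\begin{itemize}
\item The correct choice is $A_n=\mathrm{Alt}_n$ and $P_n=\frac{1}{n!}\pi_n$. Both are chain maps, and $P_nA_n=\id$. This is exactly the paper's choice.
\item With your $A_n=\frac{1}{n!}\mathrm{Alt}_n$, one gets $dA_n=(n+1)A_{n+1}(1\wedge-)$. Then $PA=\id$ forces $P_n=\pi_n$, so neither map is a chain map.
\item Your fallback of inserting $1$ only at the two ends does not help. It is not a complex: $d^2f$ contains the term $1\otimes 1\otimes f$, which nothing cancels.
\end{itemize}

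\textbf{Two smaller slips.}
\begin{itemize}
\item In your displayed bar differential, the summand $j=n$ cancels the final term, so the last coface is lost. The correct formula is $\sum_{j=0}^{n}(-1)^j\delta_j$, with $\delta_j$ inserting $1$ at position $j$. That is, the middle sum runs over $1\le j\le n-1$ and the last term carries the sign $(-1)^n$.
\item In the $K\oplus M$ argument, $1\wedge-$ maps the \emph{first} summand $\Lambda^{n-1}M$ of degree $n-1$ isomorphically onto $1\wedge\Lambda^{n-1}M$, and it kills $1\wedge\Lambda^{n-2}M$.
\end{itemize}

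With these corrections the rest of your argument is correct, and the indexing matches the stated formula. Both maps induce the identity on $H^n(X,K)$ and preserve the stupid filtrations, so the two filtrations agree. Moreover, $\sigma_{\ge i}$ of the exterior resolution is quasi-isomorphic to $\Lambda^i_K(\OOO/K_X)[-i]$, which gives the stated description of $F^i$.
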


\begin{proof}
Consider a $K$-vector space $V$ with a $K$-linear embedding $\iota: K \hookrightarrow V$. 
We omit $\iota$ from the notation and denote for example $\iota(1)$ simply by $1$. We may form a coaugmented co-semisimplicial $K$-vector space
\begin{equation} \label{eqkv} \xymatrix{ K \ar@{-->}[r] & V \ar@<2pt>[r] \ar@<-2pt>[r]  & T^{2}_K V \ar@<-4pt>[r] \ar[r] \ar@<4pt>[r]   \ar[r] & \cdots  }\end{equation}
with coface maps given by
\[ \delta_i:  v_0 \otimes \cdots \otimes v_n \mapsto  v_0 \otimes \cdots \otimes v_{i-1} \otimes 1 \otimes v_{i} \otimes \cdots \otimes v_{n}.  \]
Its (unnormalized) cochain complex is exact with contracting homotopy
\[ h_n: v_0 \otimes \cdots \otimes v_n \mapsto \varepsilon(v_0) v_1 \otimes \cdots \otimes v_n  \]
where $\varepsilon: V \to K$ is any $K$-linear splitting of $\iota$,
and yields thus a resolution of $K$
\[ \xymatrix{ 0 \ar[r] &K \ar[r] & V \ar[r] & T^{2}_K V  \ar[r] & \cdots.  }\]
If $V$ is a $K$-algebra then (\ref{eqkv}) extends to a coaugmented cosimplicial $K$-vector space. 

There are morphisms of complexes
\[ \xymatrix{ K \ar[r] \ar@{=}[d] & V \ar[r] \ar@{=}[d] & \Lambda^2_K V  \ar[r] \ar@<2pt>[d]^{P_1} &  \Lambda^3_K V  \ar[r] \ar@<2pt>[d]^{P_2} &  \cdots   \\
 K \ar[r]  & V \ar[r]  & T^2_K V^{\otimes 2}  \ar[r] \ar@<2pt>[u]^{A_1} & T^3_K V  \ar[r] \ar@<2pt>[u]^{A_2} & \cdots  } \]
where $P_n$ is the quotient map followed by multiplication with $\frac{1}{(n+1)!}$  and $A_n$ is the map (antisymmetrization)
\begin{equation} \label{eqanti} A_n: v_0 \wedge \cdots \wedge v_n \mapsto \sum_{\sigma \in S_{n+1}} (-1)^{|\sigma|} v_{\sigma(0)} \otimes \cdots \otimes v_{\sigma(n)},
\end{equation}
such that $A P = \id$. The first row with differential
\begin{equation}\label{eq1} v_0 \wedge \cdots \wedge v_n \mapsto  1 \wedge v_0 \wedge \cdots \wedge v_{n}  \end{equation}
 is therefore also exact.
Splitting the first row into short exact sequences we get 
\[ \xymatrix{ 0 \ar[r] & \Lambda^i (V/K) \ar[r]  & \Lambda^{i+1} V  \ar[r]  & \Lambda^{i+1} (V/K)    \ar[r]  & 0 }  \]
where the inclusion is induced by (\ref{eq1}) and the projection is the quotient map. Its exactness can also be seen directly from
choosing a splitting $V = K \oplus W$ and using $\Lambda^{n+1} V \cong \Lambda^{n+1} W \oplus K \otimes \Lambda^{n} W$.
We have thus a resolution
\[ \xymatrix{ 0 \ar[r] & \Lambda^i (V/K) \ar[r]  & \Lambda^{i+1} V  \ar[r]  & \Lambda^{i+2} V   \ar[r]  & \cdots }  \]

The statement follows by applying the above to $V = \OOO(U)$ for all open $U \subseteq X$ and sheafifying.  
\end{proof}

\begin{PROP}\label{PROPL}
Let $X$ be a compact complex manifold with structure sheaf $\OOO_X$. We have
a morphism of resolutions
\[  \xymatrix{ 0 \ar[r] & \C_X \ar[r] \ar@{=}[d] & \OOO_X \ar@{=}[d] \ar[r] & T^2_{\C} \OOO_X \ar[d] \ar[r] & \cdots   \\
 0 \ar[r] & \C_X \ar[r]  & \OOO_X \ar[r] & \Omega^1_{\C}  \ar[r] & \cdots }   \]
 where the first row comes from Proposition~\ref{PROPRES} and the second row is the holomorphic de Rham complex. 
 If $X$ is a complex torus then this morphism induces a filtered isomorphism. In other words, the filtration $F^{\bullet}_{\OOO_X}H^n(X, \C)$ is the Hodge filtration. 
\end{PROP}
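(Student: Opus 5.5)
The morphism is defined in degree $n$ by
\[ f_0 \otimes f_1 \otimes \cdots \otimes f_n \mapsto \tfrac{1}{n!}\, f_0\, \mathrm{d}f_1 \wedge \cdots \wedge \mathrm{d}f_n . \]
I will first check that it commutes with the differentials. The differential of the first row is $\sum_i (-1)^i \delta_i$, where $\delta_i$ inserts a $1$. Since $\mathrm{d}1 = 0$, every term with $i \geq 1$ is killed. The term $\delta_0$ gives $1 \otimes f_0 \otimes \cdots \otimes f_n$, which maps to $\frac{1}{(n+1)!}\,\mathrm{d}f_0 \wedge \cdots \wedge \mathrm{d}f_n$. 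This is the image under the de Rham differential of $\frac{1}{n!} f_0\, \mathrm{d}f_1 \wedge\cdots\wedge \mathrm{d}f_n$, up to a normalization of constants. I will fix the constants, or simply take the map $f_0\otimes\cdots\otimes f_n\mapsto f_0\,\mathrm{d}f_1\wedge\cdots\wedge\mathrm{d}f_n$ composed with $A P$, so that the square commutes. In degree $0$ both rows restrict to the identity on $\C_X \to \OOO_X$. Hence we have a morphism of resolutions of $\C_X$, and it is automatically a quasi-isomorphism. It therefore induces a morphism of hypercohomology spectral sequences which is the identity on the abutment $H^n(X,\C)$. The filtration $F^\bullet_{\OOO_X}$ is the column filtration of the first row (Proposition~\ref{PROPRES}), and the Hodge filtration is the stupid filtration of $\Omega^\bullet$. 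So the map $H^n(X,\C)\to H^n(X,\C)$, which is the identity, respects filtrations: $F^i_{\OOO_X} \subseteq F^i_{\mathrm{Hodge}}$. This inclusion holds for every compact complex manifold.

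For a torus $X = \C^g/\Gamma$ it remains to show the reverse inclusion. Since the Hodge spectral sequence degenerates, it suffices to show that each Hodge class in $H^{q}(X,\Omega^p)$ lifts to $H^q(X, \Lambda^p_{\C}(\OOO_X/\C_X))$, using the description of $F^p$ from Proposition~\ref{PROPRES}. The key input is that on a torus the linear coordinate functions $z_1,\dots,z_g$ on $\C^g$ are local sections of $\OOO_X$, defined up to additive constants, and that $\Omega^1_X$ is trivialized by $\mathrm{d}z_j$. The classes $z_j$ give well-defined global sections of $\OOO_X/\C_X$, because translations change $z_j$ only by constants. Thus $z_{j_1}\wedge\cdots\wedge z_{j_p}$ is a global section of $\Lambda^p(\OOO_X/\C_X)$ mapping to $\mathrm{d}z_{j_1}\wedge\cdots\wedge \mathrm{d}z_{j_p}$. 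The map $\Lambda^p(\OOO_X/\C_X) \to \Omega^p$ composed with $\Omega^p \cong \OOO_X \otimes \Lambda^p(\C^g)^*$ then admits a section on the level of sheaves of $\C$-vector spaces. This section is $\Lambda^p(\C^g)^* \otimes \OOO_X \to \Lambda^p(\OOO_X/\C_X)$, sending $\varphi\otimes f$ to $f\cdot(\text{the wedge})$. Here I must be careful: $\Lambda^p(\OOO_X/\C_X)$ is only a $\C$-module, so I would instead use the constant subsheaf $\Lambda^p(\C^g)^*\otimes \C_X$. All Hodge classes of $H^q(X,\Omega^p)$ come from $H^q(X,\C)\otimes \Lambda^p (\C^g)^*$, namely from antiholomorphic constant forms. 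So I lift along the composite $\C_X\otimes\Lambda^p(\C^g)^* \to \Lambda^p(\OOO_X/\C_X) \to \Omega^p$, which induces surjections on $H^q$. Therefore $\mathrm{gr}^p$ of the map is surjective, and with the inclusion from the first step the filtrations coincide.

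The main obstacle is making this lifting argument compatible with the spectral sequence. One has to show that lifting at the $E_1$-level $H^q(X,\Omega^p)$ through $H^q(X,\Lambda^p(\OOO_X/\C_X))$ really lands in $F^p_{\OOO_X}H^{p+q}$. It also has to give the same class as the Hodge-theoretic one, not just the same class modulo $F^{p+1}$. I would first handle this by downward induction on $p$ using degeneration at $E_1$ on the de Rham side. If that is awkward, I would instead compute everything with invariant forms: $H^*(X,\C)=\Lambda^*(V^*\oplus\bar V^*)$, writing $V=\C^g$, with explicit Čech or group cocycles $\bar z_j$ in $H^1(X,\C_X)$.
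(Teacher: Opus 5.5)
Your proposal is correct, but it proves the reverse inclusion by a different route than the paper.

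\textbf{What the paper does.} It also passes to the successive kernels: $\Lambda^i_{\C}(\OOO_X/\C_X)\cong\Lambda^i_{\C}\Omega^1_c$, mapping to $\Omega^i_c$ by the wedge product. It then argues directly in the abutment. It uses the cup-product diagram $H^{k_1}(X,\Omega^1_c)\otimes\cdots\otimes H^{k_i}(X,\Omega^1_c)\to H^{n-i}(X,\Lambda^i_{\C}\Omega^1_c)\to H^{n-i}(X,\Omega^i_c)\hookrightarrow H^n(X,\C)$. For a torus, $F^iH^n$ is spanned by cup products of $i$ classes from $F^1$. So the two images in $H^n(X,\C)$ coincide at once, with no induction.

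\textbf{What you do.} You work on graded pieces instead. The global sections $\bar z_j$ of $\OOO_X/\C_X$ give a sheaf map $\C_X\otimes\Lambda^p(\C^g)^*\to\Lambda^p_{\C}(\OOO_X/\C_X)$. Its composite to $\Omega^p_X\cong\OOO_X\otimes\Lambda^p(\C^g)^*$ is surjective on $H^q$ because $H^q(X,\C)\to H^q(X,\OOO_X)$ is. A downward induction on $p$ then finishes.

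\textbf{Comparison.} Your route needs only the kernel description from Proposition~\ref{PROPRES} and the long exact sequences of the stupid truncations. It never uses the compatibility of cup products with the connecting maps of the resolutions, which the paper asserts without proof. The price is the induction. The paper's route is shorter and stays with the multiplicative structure that drives the rest of the article.

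\textbf{The step you flag as the main obstacle is routine.} By definition, the image of $H^q(X,\Lambda^p_{\C}(\OOO_X/\C_X))$ in $H^{p+q}(X,\C)$ is $F^p_{\OOO_X}$. This map factors through $H^q(X,\Omega^p_c)=\mathbb{H}^{p+q}(\Omega^{\geq p})$. The exact sequence of $0\to\Omega^{\geq p+1}\to\Omega^{\geq p}\to\Omega^p[-p]\to 0$ turns surjectivity onto $H^q(X,\Omega^p)$ into $F^p=F^p_{\OOO_X}+F^{p+1}$. By the inductive hypothesis $F^{p+1}=F^{p+1}_{\OOO_X}\subseteq F^p_{\OOO_X}$, so $F^p=F^p_{\OOO_X}$. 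No lift ever has to match a given class exactly.

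\textbf{A minor point on normalization.} The first step needs no normalizing constant. The map $f_0\otimes\cdots\otimes f_n\mapsto f_0\,\mathrm{d}f_1\wedge\cdots\wedge\mathrm{d}f_n$, which is the paper's choice, commutes with the differentials on the nose. It is your factor $\frac{1}{n!}$ that breaks the commutation.
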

\begin{proof}
The morphism is given by
 \[ v_0 \otimes \cdots \otimes v_n \mapsto v_0 \mathrm{d} v_1 \wedge \cdots \wedge \mathrm{d} v_n . \]
 The composition with the antisymmetrization $A$ (\ref{eqanti})  obviously induces just the wedge product on the successive kernels in the resolutions 
 \[ \Lambda^i_{\C} \Omega^1_c \to \Omega^i_c. \]
For all non-negative integers $k_1 + \cdots + k_i = n-i$ we 
have a commutative diagram given by the cup product
\[ \xymatrix{ H^{k_1}(X, \Omega^1_c) \otimes \cdots \otimes H^{k_i}(X, \Omega^1_c) \ar[r] \ar@{^{(}->}[d] & H^{n-i}(X,  \Lambda^i_{\C} \Omega^1_c) \ar[rd] \ar[r] & H^{n-i}(X,  \Omega^i_c) \ar@{^{(}->}[d] \\
H^{k_1+1}(X, \C) \otimes \cdots \otimes H^{k_i+1}(X, \C) \ar[rr] & &  H^{n}(X,  \C).
 } \]
By Hodge theory for complex tori \cite[Chapter I]{Mum70} the top morphism (composition) is surjective. Hence the maps 
\[H^{n-i}(X,  \Lambda^i_{\C} \Omega^1_c) \to H^{n}(X,  \C) \]
and
\[H^{n-i}(X,   \Omega^i_c) \to H^{n}(X,  \C) \]
have the same image. 
\end{proof}

\begin{PROP}\label{PROPK}
Let $X$ be a compact complex manifold. We have
\[ F^i_{\OOO_X} H^n(X, \Q) = \im \left( H^{n-i}(X, \Lambda^i_{\Z} \OOO_X^*)_{\Q} \to H^n(X, \Q) \right)   \]
\end{PROP}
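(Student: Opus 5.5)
The plan is to reduce the statement to Proposition~\ref{PROPRES} by means of the exponential sequence
\[ \xymatrix{ 0 \ar[r] & \Z(1)_X \ar[r] & \OOO_X \ar[r]^-{\exp} & \OOO_X^* \ar[r] & 1, } \]
where $\Z(1) = 2\pi i \Z$.

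First I apply Proposition~\ref{PROPRES} with $K = \Q$ and the embedding $\Q(1)_X = 2\pi i\,\Q_X \hookrightarrow \OOO_X$ instead of the standard one. This gives a resolution of $\Q(1)_X$ and hence a filtration
\[ F^i_{\OOO_X} H^n(X,\Q(1)) = \im\left( H^{n-i}(X, \Lambda^i_\Q (\OOO_X/\Q(1)_X)) \to H^n(X,\Q(1))\right). \]
I then compare it with the filtration built from the standard embedding $\Q_X \hookrightarrow \OOO_X$. Multiplication by $2\pi i$ on $\OOO_X$ is a $\Q$-linear automorphism of ring sheaves' underlying vector sheaves that carries $1$ to $2\pi i$. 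Applied termwise to $T^j_\Q\OOO_X$ or $\Lambda^j_\Q\OOO_X$, it gives an isomorphism of the two resolutions, and it is compatible with the identification $\Q_X \cong \Q(1)_X$. Hence, under $H^n(X,\Q)\cong H^n(X,\Q(1))$, the two filtrations correspond. Only the linear structure and the distinguished element $\iota(1)$ enter the construction, so the scaling argument is legitimate.

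Next I identify the coefficient sheaves. Since $\OOO_X$ is a sheaf of $\Q$-vector spaces, the exponential sequence gives $\OOO_X^*\otimes_\Z\Q \cong \OOO_X\otimes\Q/\Z(1)\otimes\Q = \OOO_X/\Q(1)_X$. Therefore
\[ \Lambda^i_\Z\OOO_X^*\otimes\Q \cong \Lambda^i_\Q(\OOO_X^*\otimes\Q)\cong \Lambda^i_\Q(\OOO_X/\Q(1)_X). \]
Because $X$ is compact Hausdorff, sheaf cohomology commutes with filtered colimits, and $-\otimes\Q$ is such a colimit. This yields
\[ H^{n-i}(X,\Lambda^i_\Z\OOO_X^*)_\Q \cong H^{n-i}(X,\Lambda^i_\Q(\OOO_X/\Q(1)_X)). \]

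It remains to check that the map $H^{n-i}(X,\Lambda^i_\Z\OOO_X^*)_\Q\to H^n(X,\Q)$ is, under these identifications, the edge map of the resolution. This map is to be understood as the one induced by the resolution of Proposition~\ref{PROPRES}, i.e.\ the iterated connecting maps of the short exact sequences $0\to\Lambda^j(\OOO/\Q(1))\to\Lambda^{j+1}\OOO\to\Lambda^{j+1}(\OOO/\Q(1))\to0$. For $i=1$ it is just the connecting map of the exponential sequence tensored with $\Q$.

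I expect this last step to be the main obstacle: making precise, for general $i$, that the map out of $\Lambda^i_\Z\OOO_X^*$ (e.g.\ via $\mathrm{dlog}$ or cup products of $c_1$'s) agrees with this edge map up to a nonzero rational constant such as $(2\pi i)^{-i}$ or $i!$. Such a constant does not affect images, but it has to be shown that no other discrepancy occurs. I would first try to verify the agreement on decomposable local sections $f_1\wedge\dots\wedge f_i$. There I would use the splitting $\Lambda^{j+1}V\cong\Lambda^{j+1}W\oplus K\otimes\Lambda^jW$ from the proof of Proposition~\ref{PROPRES} to compute the connecting maps explicitly as cup products of the classes of the $\log f_k$.
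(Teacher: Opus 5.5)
Your proposal is correct and is essentially the paper's proof. It uses the same three ingredients: Proposition~\ref{PROPRES}, the identification $\Lambda^i_\Z\OOO_X^*\otimes\Q\cong\Lambda^i_\Q(\OOO_X/\Q_X)$ coming from the exponential sequence (the paper absorbs your $2\pi i$ rescaling of resolutions into the isomorphism $\OOO_X^*\cong\OOO_X/\Z_X$ given by $\frac{\log}{2\pi i}$), and commuting cohomology with $-\otimes\Q$ on compact $X$, where your filtered-colimit argument is a cleaner justification than the paper's.

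The step you flag as the main obstacle is not needed. The map in the statement is, as you yourself say, the one induced by the resolution, and its compatibility with $\mathrm{dlog}$ is a separate issue that the paper settles in the subsequent lemma by functoriality of the resolutions in $K$.
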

\begin{proof}
The discussion in the proof of Proposition~\ref{PROPRES} shows that
\[ F^i_{\OOO_X}  H^n(X, \Q) = \im \left( H^{n-i}(X, \Lambda^i_{\Q} \frac{ \OOO_X }{\Q_X})  \to H^n(X, \Q) \right)   \]
and we have 
\[ \Lambda^i_{\Q} \frac{ \OOO_X }{\Q_X} = ( \Lambda^i_{\Z} \OOO_X^* ) \otimes_{\Z} \Q \]
using that $\OOO_X^*  \cong \OOO_X / \Z_X$ via $\frac{\mathrm{log}}{2\pi i}$ (exponential sequence). Furthermore, we have
\[ H^{n-i}(X, ( \Lambda^i_{\Z} \OOO_X^* ) \otimes_{\Z} \Q ) \cong  H^{n-i}(X,  \Lambda^i_{\Z} \OOO_X^* ) \otimes_{\Z} \Q \]
because $X$ is compact and $- \otimes_{\Z} \Q$ commutes with finite products. 
\end{proof}

\begin{LEMMA}
The diagram
\[ \xymatrix{ H^n(X, \Lambda^n_{\Z} \OOO_X^*)_{\Q} \ar[drr] \ar[rr]^{\Lambda^n (\frac{1}{2 \pi i}\mathrm{dlog})} & & H^{n}(X, \Omega^n_X) \ar[d] \\  
 & & H^{2n}(X, \C) } \]
 where the vertical morphisms are the morphisms from the resolution and the morphism from Hodge theory, respectively, commutes.
\end{LEMMA}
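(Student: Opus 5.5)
We need to show that two maps $H^n(X,\Lambda^n_\Z\OOO_X^*)_\Q \to H^{2n}(X,\C)$ agree. The first is the one from the resolution, i.e.\ from Propositions~\ref{PROPRES} and~\ref{PROPK}. The second is $\Lambda^n(\frac{1}{2\pi i}\mathrm{dlog})$ followed by the Hodge-theoretic edge map $H^n(X,\Omega^n)\to H^{2n}(X,\C)$. The plan is to factor both maps through a single morphism of complexes of sheaves and then use naturality of the boundary maps.

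\textbf{Step 1 (reduction to sheaves).} Via $\frac{\log}{2\pi i}$ we identify $\Lambda^n_\Z\OOO_X^*\otimes\Q$ with $\Lambda^n_\Q(\OOO_X/\Q_X)$. Under this identification $\frac{1}{2\pi i}\mathrm{dlog}$ becomes $f\mapsto \mathrm{d}f$, with $f=\frac{\log}{2\pi i}$ a local logarithm. So the horizontal map is induced by the sheaf map
\[ \Lambda^n_\Q(\OOO_X/\Q_X)\to \Omega^n_X,\qquad f_1\wedge\cdots\wedge f_n\mapsto \mathrm{d}f_1\wedge\cdots\wedge \mathrm{d}f_n . \]
This is well defined because $\mathrm{d}$ kills constants.

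\textbf{Step 2 (comparison of resolutions).} The map $H^n(X,\Lambda^n_\Q(\OOO/\Q))\to H^{2n}(X,\Q)$ is an iterated connecting homomorphism for the short exact sequences of Proposition~\ref{PROPRES}:
\[ 0\to\Lambda^{i}(\OOO/\Q)\to\Lambda^{i+1}\OOO\to\Lambda^{i+1}(\OOO/\Q)\to0 . \]
In other words, it is the boundary map coming from the truncated resolution $\Lambda^n(\OOO/\Q)[-n] \simeq (\Q\to\OOO\to\cdots\to\Lambda^n\OOO)$. The edge map $H^n(\Omega^n)\to H^{2n}(\C)$ is likewise the iterated boundary for the truncations of the holomorphic de Rham complex, whose successive kernels are the closed forms $\Omega^i_c$.

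We take the morphism of resolutions of Proposition~\ref{PROPL}, $v_0\otimes\cdots\otimes v_n\mapsto v_0\,\mathrm{d}v_1\wedge\cdots\wedge \mathrm{d}v_n$, and precompose it with $A$ (after tensoring with $\C$). This gives a morphism of complexes from the first row to the de Rham complex. On successive kernels it induces maps $\Lambda^i(\OOO/\Q)\to\Omega^i_c$. On generators these should be $f_1\wedge\cdots\wedge f_i\mapsto c\cdot \mathrm{d}f_1\wedge\cdots\wedge \mathrm{d}f_i$ for a combinatorial constant $c$. To see this, lift to $1\wedge f_1\wedge\cdots\wedge f_i\in\Lambda^{i+1}\OOO$, apply $A_i$, and then apply the map: only permutations with $1$ in position $0$ survive, since $\mathrm{d}1=0$, and each survivor contributes $\pm \mathrm{d}f_{\sigma(1)}\wedge\cdots$, giving $i!$ times the wedge. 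Naturality of connecting homomorphisms along a morphism of short exact sequences of sheaves then yields a commutative square whose top arrow is $c\cdot(\text{Step 1 map})$.

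\textbf{Step 3 (normalizations).} We must check that $c=1$ once the factors $\frac{1}{(n+1)!}$ in $P$ and the factors from $A$ are taken into account. If the constants do not cancel, we adjust by the convention that the resolution maps are defined through $P$, so that the filtrations and images are unaffected. This constant bookkeeping, together with matching the sign conventions of the iterated boundaries on both sides, is where I expect the main obstacle to lie. I would first try fixing the identification of the kernel $\Lambda^i(\OOO/\Q)\hookrightarrow\Lambda^{i+1}\OOO$ via $x\mapsto 1\wedge x$, as in (\ref{eq1}), and computing the constant directly for $n=1,2$. After rationalization (Proposition~\ref{PROPK}) any nonzero rational discrepancy is harmless for image statements, but the lemma asserts strict commutativity, so the constants have to be tracked exactly.
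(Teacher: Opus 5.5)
There is a genuine gap in your Step~2, and it is not the constant bookkeeping. Steps~1--2 do reproduce the first half of the paper's argument: functoriality of the resolutions in $K$, then the morphism to the de Rham complex inducing the wedge product on successive kernels. The false step is the identification of the Hodge-theoretic map $H^n(X,\Omega^n_X)\to H^{2n}(X,\C)$ with an iterated boundary of the truncated holomorphic de Rham complex.

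The connecting homomorphisms of $0\to\Omega^{i-1}_{c}\to\Omega^{i-1}\to\Omega^{i}_{c}\to 0$ (equivalently, of $\Omega^n_{c}[-n]\simeq\Omega^{\geq n}_X\to\Omega^\bullet_X$) give a map $H^n(X,\Omega^n_{c,X})\to H^{2n}(X,\C)$ with image in $F^nH^{2n}(X,\C)$. They give no map out of $H^n(X,\Omega^n_X)$. On $H^n(X,\Omega^n_X)$ the de Rham complex only gives a map in the other direction, $F^nH^{2n}\to\mathrm{Gr}^n_FH^{2n}\cong H^n(X,\Omega^n_X)$. The ``morphism from Hodge theory'' $H^n(X,\Omega^n_X)\cong H^{n,n}\hookrightarrow H^{2n}(X,\C)$ is a splitting of this map that uses complex conjugation.

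The two routes genuinely differ. Take $n=1$ and $\dim X\geq 2$. A class in $H^{2,0}$ lifts (uniquely) to $H^1(X,\Omega^1_{c,X})$ and maps to $0$ in $H^1(X,\Omega^1_X)$, yet its de Rham image is nonzero. So naturality of connecting maps along your morphism of short exact sequences can only prove commutativity of the diagram with $H^n(X,\Omega^n_{c,X})$ and the de Rham boundary. It cannot prove the diagram of the lemma, with $H^n(X,\Omega^n_X)$ and the Hodge map.

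The missing idea is the reality argument that concludes the paper's proof. By your Step~2, the composite $H^n(X,\Lambda^n_{\Z}\OOO_X^*)_{\Q}\to H^n(X,\Omega^n_{c,X})\to H^{2n}(X,\C)$ agrees, up to your factor, with the map from the rational resolution. Hence its image consists of rational, so real, classes lying in $F^n$, and therefore lies in $F^n\cap\overline{F^n}=H^{n,n}$. For such a class $\alpha$, the Hodge map applied to the image of $\alpha$ in $\mathrm{Gr}^n_F= H^n(X,\Omega^n_X)$ returns $\alpha$ itself. That image is exactly the image under $H^n(X,\Omega^n_{c,X})\to H^n(X,\Omega^n_X)$, because $\Omega^n_c[-n]\to\Omega^{\geq n}\to\Omega^n[-n]$ is the inclusion. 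Only with this step do the two routes agree, and you need to add it.

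The constant you worried about is harmless. Your own computation gives $n!$, the paper does not track it either, and the lemma is only used through images of rational classes, where a nonzero rational factor is irrelevant.
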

\begin{proof}
Obviously the diagram
\[ \xymatrix{ H^n(X, \Lambda^n_{\Z} \OOO_X^*)_{\Q} \ar[drr] \ar[rr]^{\Lambda^n (\frac{1}{2 \pi i}\mathrm{dlog})} & & H^{n}(X, \Lambda^n_{\C} \Omega^1_{c,X}) \ar[d] \\  
 & & H^{2n}(X, \C) } \]
  where the vertical morphisms are induced by the resolutions, commutes
 because it comes from a morphism of resolutions (functoriality in $K$). In the proof of Proposition~\ref{PROPL} we have seen that the right hand side map factors as
 \[ H^{n}(X, \Lambda^n_{\C} \Omega^1_{c,X}) \to H^{n}(X,  \Omega^n_{c,X}) \to H^{2n}(X, \C).  \]
 However, its image lies in the subspace $H^{n}(X,  \Omega^n_{X}) \cong H^{n}(X,  \Omega^n_{c,X})  \cap \overline{H^{n}(X,  \Omega^n_{c,X})}$ by Hodge theory because the image is real. 
\end{proof}

\section{Amitsur's cosimplicial ring}

This section contains a brief discussion of Amitsur's cosimplicial ring \cite{Ami59}. 

\begin{DEF} \label{DEFAMITSUR}
Let $L|K$ be a field extension. 
{\bf Amitsur's coaugmented cosimplicial ring} $C^{\bullet}(L|K)$ is defined as
\[ \xymatrix{ K  \ar@{-->}[r] & L \ar@<3pt>[r] \ar@<-3pt>[r]  \ar@{<.}[r]  & L \otimes_K L  \ar@<-6pt>[r] \ar[r] \ar@<6pt>[r]  \ar@<-3pt>@{<.}[r]  \ar@<3pt>@{<.}[r]  \ar[r] & \cdots   }\]
with cofaces and codegeneracies given by
\begin{eqnarray*}
 \delta_i: l_0 \otimes \cdots \otimes l_n &\mapsto& l_0 \otimes \cdots \otimes l_{i-1} \otimes 1 \otimes   l_i \otimes \cdots \otimes l_{n},   \\
 s_i: l_0 \otimes \cdots \otimes l_n &\mapsto& l_0 \otimes \cdots \otimes l_{i-1} \otimes l_{i}  l_{i+1} \otimes l_{i+2} \otimes \cdots \otimes l_{n}  . 
\end{eqnarray*}
\end{DEF} 

The associated unnormalized cochain complex, the Amitsur complex, is acyclic. This follows by taking  $L' = K$ and $X^{\bullet} = K$ (constant cosimplicial $K$-vector space) in the following more general:

\begin{PROP}\label{PROPAMITSUR}
Let $L|L'|K$ be field extensions. 
For any cosimplicial $C^{\bullet}(L'|K)$-module $X^{\bullet}$ we have a homotopy equivalence (of the associated unnormalized cochain complexes)
\[ \xymatrix{ X^{\bullet} \ar@<3pt>[rr]^-{\iota}  & &  \ar@<3pt>[ll]^-{\id \otimes \chi}  X^{\bullet} \otimes_{C^{\bullet}(L'|K)} C^{\bullet}(L|K). }  \]
\end{PROP}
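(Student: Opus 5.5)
We need to prove: for a cosimplicial $C^\bullet(L'|K)$-module $X^\bullet$, the map $\iota: X^\bullet \to X^\bullet\otimes_{C^\bullet(L'|K)} C^\bullet(L|K)$ is a homotopy equivalence of unnormalized cochain complexes, with homotopy inverse $\id\otimes\chi$.

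The plan is to mimic the classical proof that the Amitsur complex is acyclic, using a $K$-linear section of $L'\hookrightarrow L$.

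\textbf{The retraction.} Choose an $L'$-linear map $\chi: L \to L'$ with $\chi(1)=1$ and $\chi|_{L'}=\id$; this is possible since $L$ is an $L'$-vector space. Extend it degreewise to $\chi^n: C^n(L|K)\to C^n(L'|K)$ by $l_0\otimes\cdots\otimes l_n \mapsto \chi(l_0)\otimes\cdots\otimes\chi(l_n)$.

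\textbf{Step 1: well-definedness.} The map $\chi^n$ is $C^n(L'|K)$-linear, because $\chi$ is $L'$-linear in each factor. Hence $\id\otimes\chi$ is well defined on $X^n\otimes_{C^n(L'|K)} C^n(L|K)$.

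\textbf{Step 2: compatibility with the differential.} Since $\chi(1)=1$, the maps $\chi^n$ commute with the cofaces $\delta_i$, which only insert $1$'s. The codegeneracies are not respected, but they do not enter the unnormalized cochain complex. So $\id\otimes\chi$ commutes with $\sum_i(-1)^i\delta_i$ and is a cochain map. The composite $(\id\otimes\chi)\circ\iota$ equals $\id$, because $\iota(x)=x\otimes 1$.

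\textbf{Step 3: the homotopy.} It remains to show $\iota\circ(\id\otimes\chi)\simeq\id$ on $Y^\bullet:=X^\bullet\otimes C^\bullet(L|K)$. I would write $Y^n=X^n\otimes_{L'^{\otimes n+1}}L^{\otimes n+1}$ and change the tensor factors one at a time. Define operators $h_j: Y^{n}\to Y^{n-1}$ by
\[
h_j(x\otimes l_0\otimes\cdots\otimes l_n)= s_j(x\otimes \chi(l_0)\otimes\cdots\otimes\chi(l_{j-1})\otimes l_j\otimes\cdots\otimes l_n),
\]
i.e.\ apply $\chi$ to the first $j$ factors and then a codegeneracy, using the codegeneracies of $X^\bullet$ and of $C^\bullet(L|K)$. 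This is the standard "prism" homotopy $h=\sum_j(-1)^jh_j$ between the identity and the map applying $\chi$ to all factors.

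\textbf{Main obstacle.} The hardest step is checking the cosimplicial identities needed for $dh+hd=\id-\iota(\id\otimes\chi)$. The key computation is $s_j\delta_j=\id$, which gives the telescoping. The problem is the mixed terms $s_j\delta_i$: the codegeneracy on $C(L|K)$ multiplies $\chi(l)\,l'$, and $\chi$ is not multiplicative, so these terms do not obviously cancel.

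The first thing I would try is to avoid this by an alternative. Express $C^\bullet(L|K)$ as $C^\bullet(L'|K)\otimes$ (a cosimplicial object built from $L$ over $L'$). Then use the extra codegeneracy (contraction) coming from $\chi$ on the first factor, exactly as in the homotopy $h_n$ of Proposition~\ref{PROPRES}, to show that the cone of $\iota$ is contractible.

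Concretely, I would show that $Y^\bullet/X^\bullet$ is contractible by filtering it by the number of factors lying outside $L'$. On the associated graded, use the contraction $\varepsilon=\chi$ on the relevant factor, and conclude by a standard filtration argument. This argument works, since everything is flat over fields. If the explicit homotopy formula fails, I would fall back on this filtration argument, which gives quasi-isomorphism. Because these are complexes of vector spaces, quasi-isomorphism upgrades to homotopy equivalence, and $\id\otimes\chi$ is then its inverse, since it is a left inverse of $\iota$.
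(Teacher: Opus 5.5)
Your Steps~1 and~2 are correct. So is your closing remark that a quasi-isomorphism of complexes of $K$-vector spaces with cochain retraction $\id\otimes\chi$ is a homotopy equivalence with inverse $\id\otimes\chi$.

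\textbf{The gap is Step~3, which is the whole content, and your explicit homotopy is off by one.} Write $\chi_{<k}$ for the map applying $\chi$ to tensor positions $0,\dots,k-1$. It is well defined on $X^n\otimes_{C^n(L'|K)}C^n(L|K)$ by $L'$-linearity, with $\chi_{<0}=\id$ and $\chi_{<n+1}=\iota(\id\otimes\chi)$ in degree $n$. Your operators are $h_j=s_j\chi_{<j}$. Since $s_j\delta_j=s_j\delta_{j+1}=\id$, the two diagonal terms $s_j\chi_{<j}\delta_j$ and $s_j\chi_{<j}\delta_{j+1}$ both equal $\chi_{<j}$ and cancel with opposite signs. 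The remaining terms cancel against $dh$, so $dh+hd=0$. Already in degree $0$ you get $hd(x\otimes l)=s_0(\delta_0x\otimes 1\otimes l)-s_0(\delta_1x\otimes l\otimes 1)=0$ instead of $x\otimes l-x\otimes\chi(l)$.

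The working homotopy, which is the paper's $\kappa$ up to sign, applies $\chi$ to positions $0,\dots,j$ before merging. That is, $h=\sum_{j=0}^{n-1}(-1)^j s_j\chi_{<j+1}$:
\[ h(x\otimes l_0\otimes\cdots\otimes l_n)=\sum_{j=0}^{n-1}(-1)^j s_j(x)\otimes\chi(l_0)\otimes\cdots\otimes\chi(l_{j-1})\otimes\chi(l_j)l_{j+1}\otimes\cdots\otimes l_n. \]
Using $\chi(1)=1$, the diagonal terms become $s_j\chi_{<j+1}\delta_j=\chi_{<j}$ and $s_j\chi_{<j+1}\delta_{j+1}=\chi_{<j+1}$. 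These telescope to $\chi_{<0}-\chi_{<n+1}=\id-\iota(\id\otimes\chi)$. The other terms are $s_j\chi_{<j+1}\delta_i=\delta_i s_{j-1}\chi_{<j}$ for $i<j$ and $\delta_{i-1}s_j\chi_{<j+1}$ for $i>j+1$, and these cancel against $dh$. Note that $\chi$ is never applied to a product, so the non-multiplicativity you named as the obstacle plays no role. (The product $\chi(l)\,l'$ you mention does not even occur in your own formula; it occurs in the correct one.)

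\textbf{Your fallback does not close the gap either.} Split $L=L'\oplus\ker\chi$. The cofaces insert $1\in L'$, so they preserve the number of $\ker\chi$-factors. Hence your filtration is really a splitting of complexes $Y=X\oplus\bigoplus_{k\ge1}Y_{(k)}$, where $Y_{(k)}$ is spanned by tensors with exactly $k$ factors in $\ker\chi$. Passing to the associated graded simplifies nothing, and the acyclicity of each $Y_{(k)}$ is exactly what must be proved.

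The one-factor contraction modelled on $h_n$ from Proposition~\ref{PROPRES} is $x\otimes l_0\otimes\cdots\mapsto s_0(x)\otimes\chi(l_0)l_1\otimes\cdots$. In positive degrees it gives $dh+hd=\id-\chi_{<1}+\delta_0s_0\chi_{<1}$. In the classical case the scalar $\chi(l_0)\in K$ moves across $\otimes_K$, so $\delta_0s_0\chi_{<1}=\chi_{<1}$ and the error vanishes. Here $\chi(l_0)\in L'$ cannot be moved across $\otimes_K$. Also, a general cosimplicial $C^\bullet(L'|K)$-module $X^\bullet$ has no extra codegeneracy that would delete position $0$. Compensating this error term is precisely what forces the higher terms of the telescoping sum above, and flatness over fields does not help with it.
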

\begin{proof}
There is a section $\chi$ of the inclusion of the corresponding unnormalized cochain complexes
\[ \iota: C^{\bullet}(L'|K)  \to C^{\bullet}(L|K) \]
which is point-wise $C^{n}(L'|K)$-linear (it is compatible with the coface maps, but not compatible with the codegeneracies). It is given by
\[ \chi_n: m_0 \otimes \cdots \otimes m_n \mapsto \chi(m_0) \otimes \cdots \otimes \chi(m_n) \]
where $\chi: L \to L'$ is any $L'$-linear section of the inclusion. 

Define the operator 
\begin{equation*} 
\kappa: X_n \otimes_{C^{n}(L'|K)} C^{n}(L|K) \to X_{n-1} \otimes_{C^{n-1}(L'|K)} C^{n-1}(L|K) 
\end{equation*}
given by mapping 
$ x \otimes m_0 \otimes \cdots \otimes m_n$ to  \[  \sum_{i=1}^n (-1)^i s_{i-1}(x) \otimes \chi(m_0) \otimes \cdots \otimes \chi(m_{i-2})  \otimes \chi(m_{i-1}) m_i \otimes m_{i+1} \otimes  \cdots  \otimes m_n.   \]
Note that this is well-defined, i.e.\@ it respects the relations of $\otimes_{C^{n}(L'|K)}$ because $\chi$ is $L'$-linear and $X^{\bullet}$ is a cosimplicial $C^{\bullet}(L'|K)$-module.
A short calculation shows:  
\[ \id - \chi \iota = \mathrm{d} \kappa + \kappa \mathrm{d}. 	\qedhere \]
\end{proof}

Let $L|K$ be a Galois extension with Galois group $G$. Recall the cosimplicial $K$-algebra that computes group cohomology with values in $L$:
\[ \xymatrix{ 
L \ar@<3pt>[r] \ar@<-3pt>[r] &\ar@{.>}[l]  \Hom(G, L) \ar@<6pt>[r] \ar[r] \ar@<-6pt>[r]  & \ar@<3pt>@{.>}[l] \ar@<-3pt>@{.>}[l]  \Hom(G^2, L) 
\ar@<-3pt>[r]  \ar@<-9pt>[r] \ar@<3pt>[r] \ar@<9pt>[r]  & \ar@<6pt>@{.>}[l] \ar@{.>}[l] \ar@<-6pt>@{.>}[l]  \cdots 
} \]
with cofaces and codegeneracies given by 
\begin{eqnarray*} (\delta_i a)(\sigma_1, \dots, \sigma_{n+1}) &=& \begin{cases} {}^{\sigma_1} a(\sigma_2, \dots, \sigma_{n+1}) & i = 0, \\
 a(\sigma_1, \dots, \sigma_{i} \sigma_{i+1}, \dots, \sigma_{n+1})  &   i=1, \dots, n, \\
  a(\sigma_1, \dots, \sigma_{n}) & i = n+1, 
  \end{cases}  \\
 (s_i a)(\sigma_1, \dots, \sigma_{n-1}) &=&  a(\sigma_1, \dots, \sigma_{i}, 1, \sigma_{i+1}, \dots, \sigma_{n-1}) .  
\end{eqnarray*}

\begin{PROP}\label{PROPGALOIS} Let $L|K$ be a Galois extension with Galois group $G$.
There is an isomorphism of Amitsur's cosimplicial $K$-algebra
with the usual cosimplicial $K$-algebra computing the group cohomology of $G$ with values in $L$:
\[ \xymatrix{ 
L  \ar@<3pt>[r] \ar@<-3pt>[r] \ar@{=}[d]  & \ar@{.>}[l] L \otimes_K L    \ar@<6pt>[r] \ar[r] \ar@<-6pt>[r]   \ar[d]^{\sim}  & \ar@<3pt>@{.>}[l] \ar@<-3pt>@{.>}[l]  L \otimes_K L \otimes_K L  \ar[d]^{\sim} \ar@<-3pt>[r]  \ar@<-9pt>[r] \ar@<3pt>[r] \ar@<9pt>[r]  & \ar@<6pt>@{.>}[l] \ar@{.>}[l] \ar@<-6pt>@{.>}[l]  \cdots 
  \\
L \ar@<3pt>[r] \ar@<-3pt>[r] &\ar@{.>}[l]  \Hom(G, L) \ar@<6pt>[r] \ar[r] \ar@<-6pt>[r]  & \ar@<3pt>@{.>}[l] \ar@<-3pt>@{.>}[l]  \Hom(G^2, L)  
\ar@<-3pt>[r]  \ar@<-9pt>[r] \ar@<3pt>[r] \ar@<9pt>[r]  & \ar@<6pt>@{.>}[l] \ar@{.>}[l] \ar@<-6pt>@{.>}[l]  \cdots 
} \]
It maps  $l_0 \otimes \cdots \otimes l_n$ to 
\[ (\sigma_1, \dots, \sigma_n) \mapsto   l_0 {}^{\sigma_1} l_1 {}^{\sigma_1 \sigma_2 } l_2 \  \cdots \  {}^{\sigma_1 \cdots \sigma_n} l_n.  \]
\end{PROP}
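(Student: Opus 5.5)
The plan is to establish the claim in three parts. First, that the given formula is a well-defined $K$-algebra isomorphism in each degree. Second, that it intertwines the cofaces. Third, that it intertwines the codegeneracies.

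Write $\Phi_n$ for the map $L^{\otimes_K (n+1)} \to \Hom(G^n, L)$ sending $l_0 \otimes \cdots \otimes l_n$ to $(\sigma_1,\dots,\sigma_n) \mapsto l_0\, {}^{\sigma_1} l_1 \cdots {}^{\sigma_1\cdots\sigma_n} l_n$. The expression is $K$-multilinear in the $l_j$ because the elements of $G$ are $K$-linear. Hence $\Phi_n$ is well defined on the tensor product. It is multiplicative because each $\sigma$ is a ring homomorphism and the value is computed pointwise in $L$. It is visibly unital.

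For bijectivity I would argue by induction on $n$. For $n=1$ this is the classical isomorphism $L\otimes_K L \cong \prod_{\sigma\in G} L$, $a\otimes b\mapsto (a\,{}^{\sigma}b)_\sigma$. It follows from Dedekind's independence of characters: the map is $L$-linear for the left factor, injective by independence of the distinct characters $\sigma$, and both sides have $L$-dimension $|G|=[L:K]$.

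For general $n$, I would write $L^{\otimes(n+1)} = L^{\otimes n}\otimes_K L$. Applying the inductive isomorphism to the first $n$ factors gives $\Hom(G^{n-1},L)\otimes_K L = \prod_{G^{n-1}} (L\otimes_K L)$. I would then apply the $n=1$ case in each factor, after twisting by $\tau=\sigma_1\cdots\sigma_{n-1}$. Concretely, $a\otimes l \mapsto (\sigma_n \mapsto a\cdot {}^{\tau\sigma_n} l)$, and the reindexing $\sigma_n \mapsto \tau\sigma_n$ is a bijection of $G$. Alternatively, a dimension count together with injectivity via independence of characters on $G^n$ suffices. The dimension count is $\dim_L = [L:K]^n = |G^n|$.

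Next I would check the cofaces. Inserting $1$ at position $0$ gives
\[ 1\cdot{}^{\sigma_1}l_0\,{}^{\sigma_1\sigma_2}l_1\cdots = {}^{\sigma_1}\bigl(\Phi(l)(\sigma_2,\dots,\sigma_{n+1})\bigr), \]
which is the $i=0$ coface. Inserting $1$ at position $i$ with $1\le i\le n$ removes the factor indexed by $\sigma_1\cdots\sigma_i$. The subsequent factors then carry products in which $\sigma_i\sigma_{i+1}$ appears merged, which is exactly $a(\dots,\sigma_i\sigma_{i+1},\dots)$. Inserting $1$ at the end drops $\sigma_{n+1}$.

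For the codegeneracy $s_i$, which multiplies $l_i l_{i+1}$, inserting $1$ at slot $i+1$ in $a$ makes consecutive partial products equal. This gives ${}^{\sigma_1\cdots\sigma_i}l_i \cdot {}^{\sigma_1\cdots\sigma_i}l_{i+1} = {}^{\sigma_1\cdots\sigma_i}(l_il_{i+1})$, as required.

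The main obstacle is only index bookkeeping. In particular, one must fix the convention for $\delta_i$ in the Amitsur complex, where the $1$ is inserted before $l_i$, so that it matches the merging $\sigma_i\sigma_{i+1}$ in the group-cochain side. I would verify this carefully for $n=1,2$ and then write the general case.
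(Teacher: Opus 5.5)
Your proposal is correct and takes the same route as the paper. The paper's proof is one line: check compatibility with the cofaces and codegeneracies, then get bijectivity from the linear independence of characters, with the dimension count $[L:K]^n = |G|^n$ left implicit (this count needs $L|K$ finite, which the paper tacitly assumes). Your index checks for $\delta_i$ and $s_i$, and your reduction to $L\otimes_K L \cong \prod_{\sigma\in G} L$ by induction or by a direct dimension count, simply spell out what the paper leaves to the reader.
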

\begin{proof}
One checks the compatibility with the coface and codegeneracy maps. It is injective, hence an isomorphism, because of the linear independence of characters. 
\end{proof}

The $n+1$ different $L$-module structures on $C^n(L|K)$ translate to the structures on
$\Hom(G^{n}, L)$
given by
\[ (\lambda \circ_i a)(\sigma_1, \dots, \sigma_{n}) = ({}^{\sigma_1 \cdots \sigma_i} \lambda) a(\sigma_1, \dots, \sigma_{n}) \]
(for $i=0$ this is thus the canonical $L$-module structure). 

The multiplication map $L \otimes_K \cdots \otimes_K L \to L$ translates to evaluation at $(1, \dots, 1)$. 
This implies the existence of a splitting
\[ L \to L \otimes_K \cdots \otimes_K L \]
given by the delta function at $(1, \dots, 1)$. This splitting is not a morphism of cosimplicial objects but 
$L$-linear w.r.t.\@ {\em any} of the $n+1$ natural $L$-module structures on $L\otimes_K \cdots \otimes_K L$.

\begin{PAR}\label{PARSYMMETRIC}
The functor ``symmetric algebra'' from dg-modules over $L$ to dg-algebras over $L$ is a left adjoint and thus transforms coproducts (direct sums) into coproducts (tensor products). Therefore we have, for any dg-module $B$ over $L$, isomorphisms
of dg-algebras
\[ S^{\bullet}_L (B^{n+1}) \cong T^{n+1}_L (S^{\bullet}_L B) .   \]
The isomorphism is in tensor degree one explicitly given by mapping: 
\[ (v_0, \dots, v_n) \mapsto \sum_i 1 \otimes \cdots \otimes 1 \otimes v_i \otimes 1 \otimes \cdots \otimes 1,    \]
with $v_i$ in the corresponding summand at position $i$. A little modification works over $K$:
\end{PAR}

\begin{PROP}\label{PROPSYMMETRIC}
Let $B$ be a dg-module over $L$. We have an isomorphism of dg-algebras
\[ S^{\bullet}_{C^n(L|K)} \left( \bigoplus_{i=0}^n  B \otimes_{L,i} C^n(L|K) \right) \cong T_K^{n+1} (S^{\bullet}_L B)   \]
via the obvious isomorphism $C^n(L|K) \cong T_K^{n+1} (S^0_L B)$ in degree 0 and via
\[ v \otimes l_1 \otimes \cdots \otimes l_n \mapsto l_1 \otimes \cdots \otimes l_{i-1} \otimes l_i v \otimes l_{i+1} \otimes \cdots \otimes l_n,    \]
 in the $i$-th direct summand. 
 The corresponding cosimplicial dg-algebras are modules under Amitsur's cosimplicial algebra $C^{\bullet}(L|K)$.
\end{PROP}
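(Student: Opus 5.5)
The plan is to derive the statement from the symmetric-algebra identity in \ref{PARSYMMETRIC}, by way of base change along the $n+1$ structure maps $L \to C^n(L|K)$.

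First I identify the summand $B \otimes_{L,i} C^n(L|K)$ as $B \otimes_K T^n_K L$, with $B$ placed in position $i$. This gives an isomorphism of $C^n(L|K)$-modules
\[ B \otimes_{L,i} C^n(L|K) \cong L \otimes_K \cdots \otimes_K B \otimes_K \cdots \otimes_K L . \]
Symmetric algebras commute with base change, so
\[ S^{\bullet}_{C^n(L|K)}\bigl(B \otimes_{L,i} C^n(L|K)\bigr) \cong L \otimes_K \cdots \otimes_K S^{\bullet}_L B \otimes_K \cdots \otimes_K L , \]
with $S^\bullet_L B$ in position $i$. This is an isomorphism of dg-algebras over $C^n(L|K)$. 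Next, $S^\bullet$ is a left adjoint from dg-modules over $C^n(L|K)$ to commutative dg-algebras over $C^n(L|K)$, so it turns the direct sum over $i$ into the tensor product over $C^n(L|K)$ of these $n+1$ algebras. Tensoring over $C^n(L|K) = T^{n+1}_K L$ the algebras $T^{n+1}_K$ of $(L,\dots,S^\bullet_L B,\dots,L)$, for $i=0,\dots,n$, collapses factorwise to $T^{n+1}_K(S^\bullet_L B)$. The reason is that in each tensor position $j$ we are computing $S^\bullet_L B \otimes_L L \otimes_L \cdots \otimes_L L \cong S^\bullet_L B$.

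Then I check that the composite isomorphism matches the stated formulas. In degree $0$ it is the identity $C^n(L|K) \cong T^{n+1}_K L = T^{n+1}_K(S^0_L B)$. On a generator $v \otimes l_0 \otimes \cdots \otimes l_n$ of the $i$-th summand it sends $v$ into position $i$, multiplied by $l_i$, which is the given formula. Since generators in degree one and the degree-zero part determine a dg-algebra map out of a symmetric algebra, this pins the map down. Compatibility with differentials is automatic, because everything is built functorially from dg-maps and the differential of $C^n(L|K)$ is zero.

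For the final sentence I define the cosimplicial structure on $T^{\bullet+1}_K(S^\bullet_L B)$ as in the proof of Proposition~\ref{PROPRES}. Cofaces insert $1$; codegeneracies multiply adjacent factors, using the $K$-algebra structure of $S^\bullet_L B$ and graded commutativity. The map $C^\bullet(L|K) \to T^{\bullet+1}_K(S^\bullet_L B)$ induced by $L = S^0_L B \hookrightarrow S^\bullet_L B$ is then a map of cosimplicial dg-algebras, and this makes each level a module, compatibly with the cosimplicial maps. The main obstacle, though only a bookkeeping one, is to make sure that transporting the cosimplicial maps back to the left-hand side gives the expected cofaces. A coface must send the $i$-th summand for $[n]$ to the $i$-th or $(i+1)$-th summand for $[n+1]$, according to $\delta_j$. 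I will verify this on generators only, using the formula $v\mapsto$ ($v$ in position $i$) and the insertion of $1$.
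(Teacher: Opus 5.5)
Your proposal is correct and is essentially the argument the paper intends: the paper only writes ``straightforward'' after \ref{PARSYMMETRIC}, and your three steps are exactly that paragraph's ``little modification over $K$''. The steps are base change of $S^\bullet_L B$ along the $i$-th structure map $L \to C^n(L|K)$, left-adjointness of $S^\bullet$ over $C^n(L|K)$, and the factorwise collapse $(M_0\otimes_K\cdots\otimes_K M_n)\otimes_{C^n(L|K)}(N_0\otimes_K\cdots\otimes_K N_n)\cong(M_0\otimes_L N_0)\otimes_K\cdots\otimes_K(M_n\otimes_L N_n)$. The cosimplicial check you defer goes through on generators: the $i$-th summand goes to the $\delta_j(i)$-th, resp.\ $s_j(i)$-th, summand, and for codegeneracies $l_j(l_{j+1}v)=(l_jv)l_{j+1}$, so nothing is missing.
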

\begin{proof}Straightforward. \end{proof}

\section{Linear algebra}

Section~\ref{SECTRES} shows that, for a complex torus $X$, the surjectivity of
\[ H^n( X, \Lambda^n_{\Z} \OOO^*_X )_{\Q} \to H^{n,n} \cap H^{2n}(X, \Q)     \]
is equivalent to   
\[  H^{2n}(X, \Q) \to  H^{2n}(X, \C) \]
being strict for the $n$-th filtration step, w.r.t.\@ the filtrations induced by the resolutions
\[ \xymatrix{ 0 \ar[r] &  K_X \ar[r] &  \OOO_X \ar[r] &  T^2_K \OOO_X \ar[r] &   T^3_K \OOO_X \ar[r] &  \cdots }  \]
for $K=\Q$ and $K=\C$.
In this section we will describe these filtrations purely in terms of linear algebra.

\begin{PAR}\label{PARSETTING}
Let  $L|K$ be a field extension in characteristic 0, let
\[ \xymatrix{  0  \ar[r] & W'  \ar[r]^{\iota} & V  \ar[r]^{\mathrm{d}} &  W \ar[r] &  0 } \]
be an exact sequence of $L$-vector spaces,  
and let $V_K$ be a $K$-structure on $V$ (i.e.\@ a $K$-vector space with fixed isomorphism $V \cong V_K \otimes_K L$).
This datum can be seen as a filtration on $V$ 
\begin{equation}\label{eqhodge1} 
 F^0(V) = V \quad F^1(V) = W' \quad F^2(V) = 0 
\end{equation}
and it induces a filtration on $\Lambda^n_L V$ 
\begin{equation}\label{eqhodge} 
 F^i(\Lambda^{n}_L V) = \Lambda^i_L W' \otimes_L \Lambda^{n-i}_L V  
\end{equation}
with 
\[ \gr^i(\Lambda^n_L V) \cong \Lambda^i_L W' \otimes_L \Lambda^{n-i}_L W. \]
\end{PAR}

\begin{PAR}
For $L = \C$ and $K=\Q$ we almost get the definition of a Hodge structure of weight 1 
(up to the condition $V = W' \oplus \overline{W'}$). If $X$ is
a complex torus with Hodge structure $V = H^1(X, \C)$, $W = H^1(X, \OOO_X)$, $W' = H^0(X, \Omega^1_X)$, and $V_{\Q} =  H^1(X, \Q)$, then 
it is well-known \cite[Chapter I]{Mum70} that (\ref{eqhodge}) describes the Hodge filtration on
\[ H^n(X, \Q) \cong \Lambda^n_{\Q} H^1(X, \Q). \]
\end{PAR}

\begin{PAR}\label{PARCANCOSIMP}
Let $K[\Delta^{\bullet}]$ be the cosimplicial free $K$-vector space generated by the canonical cosimplicial set $\Delta^{\bullet}$:
\[ \xymatrix{   \Delta_0 \ar@<2pt>[r] \ar@<-2pt>[r] & \Delta_1  \ar@<-4pt>[r] \ar[r] \ar@<4pt>[r]  \ar[r] & \cdots   }\]
with $\Delta_i = \{0, \dots, i\}$. It is augmented via summation over the coefficients. Define $K[\Delta^{\bullet}]_0$ by means of the exact sequence:
\[ \xymatrix{ 0 \ar[r] &  K[\Delta^{\bullet}]_0 \ar[r] &   K[\Delta^{\bullet}] \ar[r] & K  \ar[r] & 0. } \]
\end{PAR}

\begin{LEMMA}\label{LEMMACONTRACTDELTA}
Consider a cosimplicial $K$-vector space $X^{\bullet}$.
The unnormalized cochain complex of $K[\Delta^{\bullet}] \otimes_K X^{\bullet}$ is acyclic. A contracting homotopy is explicitly given by
\[ \kappa_n : \Mat{x_0 \\ \vdots \\ x_n} \mapsto \Mat{s_1x_1 \\ - s_2x_2 \\ \vdots \\ (-1)^{n-1}s_{n}x_n} \]
\end{LEMMA}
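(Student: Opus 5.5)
We view $K[\Delta^n]\otimes_K X^n$ as $\bigoplus_{j=0}^n X^n$, writing an element as a column $(x_0,\dots,x_n)^T$ whose $j$-th entry is the coefficient of the basis vector $[j]\in\Delta_n$. The proof then has three steps: make the differential explicit in these coordinates, observe that the index bookkeeping of a cosimplicial map $\theta\colon\Delta_m\to\Delta_n$ on the points of the simplex exactly mirrors the bookkeeping of cofaces and codegeneracies on $X^\bullet$, and verify the homotopy identity $\mathrm{d}\kappa+\kappa\mathrm{d}=\id$ degreewise.

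For the first step, recall that a coface $\delta_i\colon\Delta_n\to\Delta_{n+1}$ is the increasing injection skipping $i$, and that $\delta_i$ acts on the tensor product diagonally. So the component $(\delta_i x)_k$ is $\delta_i x_{j}$ when $k=\delta_i(j)$, and $0$ when $k=i$. The differential is $\mathrm{d}=\sum_{i=0}^{n+1}(-1)^i\delta_i$, hence
\[ (\mathrm{d}x)_k=\sum_{i\neq k}(-1)^i\,\delta_i x_{\sigma_i(k)}, \]
where $\sigma_i(k)$ denotes $k$ if $k<i$ and $k-1$ if $k>i$. We set $\kappa$ as in the statement, so $(\kappa x)_{j-1}=(-1)^{j-1}s_jx_j$ for $j=1,\dots,n$; the entry $x_0$ is dropped.

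The second step is the computation of $(\mathrm{d}\kappa x+\kappa\mathrm{d}x)_k$ for $x\in$ degree $n$. This produces double sums of terms $\delta_i s_j x_\ast$ and $s_j\delta_i x_\ast$, and we reduce them using the cosimplicial identities:
\[ s_j\delta_i=\delta_i s_{j-1}\ (i<j),\qquad s_j\delta_j=s_j\delta_{j+1}=\id,\qquad s_j\delta_i=\delta_{i-1}s_j\ (i>j+1). \]
In $(\kappa\mathrm{d}x)_k=(-1)^k s_{k+1}(\mathrm{d}x)_{k+1}$ the two identity terms, $i=k+1$ and $i=k+2$, are excluded by $i\neq k+1$. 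Only $i=k+2$ survives, with sign $(-1)^k(-1)^{k+2}$, and it yields $s_{k+1}\delta_{k+2}x_{k+1}=x_{k+1}$ up to sign.

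The main obstacle is this bookkeeping, and there is a tension in it. The normalisation in the statement drops $x_0$ and shifts the index. That suggests the homotopy contracts toward the vertex $0$, in which case the expected identity is $\mathrm{d}\kappa+\kappa\mathrm{d}=\id$ on the whole complex, including degree $0$ where $\kappa_0=0$. Degree $0$ needs a separate check: $K[\Delta^0]\otimes X^0=X^0$, and we need $\kappa_1\mathrm{d}=\id$. Here $(\mathrm{d}x)=(-\delta_1x,\ \delta_0x)$ in coordinates, so $\kappa_1(\mathrm{d}x)=s_1\delta_0x$. Literally $s_1$ does not exist on $X^1$, so the indexing of $s$ must be read as $s_{j-1}$. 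With that reading, $s_0\delta_0x=x$, as needed.

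So the plan is to fix the convention that $s_j$ in the statement means the codegeneracy $X^n\to X^{n-1}$ merging the positions adjacent to the coordinate dropped by the vertex map, and to choose the signs of $\mathrm{d}$ consistently. Then we verify the identity in each coordinate $k$. The terms with $i$ and $j$ in "generic" position cancel pairwise between $\mathrm{d}\kappa$ and $\kappa\mathrm{d}$ by the commutation relations above, with opposite signs coming from the index shift. Exactly one term $s\delta=\id$ survives and gives $x_k$. The degenerate range, where $k$ is near the dropped index, is handled separately; there the two identity relations $s_j\delta_j=s_j\delta_{j+1}=\id$ are what make the boundary terms match. Once $\mathrm{d}\kappa+\kappa\mathrm{d}=\id$ holds, the unnormalized cochain complex is contractible, hence acyclic.
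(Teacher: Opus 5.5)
The paper leaves this lemma to the reader. Your route (write $\mathrm{d}$ in coordinates, reduce with the cosimplicial identities, check $\mathrm{d}\kappa+\kappa\mathrm{d}=\id$) is the intended one. You are also right that the printed formula is off by one: it must be read as $(\kappa x)_m=(-1)^m s_m x_{m+1}$ for $0\le m\le n-1$, with $\kappa_0=0$. The gap is that the proposal never carries out the verification for this corrected homotopy, and this computation is the whole content of the lemma.

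Your second step uses the literal indices. Its surviving term $s_{k+1}\delta_{k+2}x_{k+1}$ has sign exactly $(-1)^k(-1)^{k+2}=+1$ and gives $x_{k+1}$ in slot $k$, not $x_k$. So that computation is not a step of the proof. It is, together with the non-existence of $s_n$ on $X^n$, the symptom that the literal reading fails, and it should be discarded rather than patched.

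Your closing sketch also does not match what actually happens. It describes a separate ``degenerate range'' where both $s_j\delta_j=\id$ and $s_j\delta_{j+1}=\id$ are needed, and signs of $\mathrm{d}$ that must be ``chosen''. In fact, with the standard $\mathrm{d}=\sum_i(-1)^i\delta_i$ no choice is needed. Moreover, $s_k\delta_{k+1}$ never occurs, because $(\delta_{k+1}x)_{k+1}=0$.

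The actual check is short and uniform in $k$. For $x$ in degree $n$ and $0\le k\le n$:
\[ (\mathrm{d}\kappa x)_k=\sum_{i<k}(-1)^{i+k-1}\delta_i s_{k-1}x_k+\sum_{k<i\le n}(-1)^{i+k}\delta_i s_k x_{k+1}, \]
\[ (\kappa\mathrm{d}x)_k=(-1)^k s_k(\mathrm{d}x)_{k+1}=\sum_{i\le k}(-1)^{i+k}s_k\delta_i x_k+\sum_{k+2\le i\le n+1}(-1)^{i+k}s_k\delta_i x_{k+1}. \]
The terms cancel as follows:
\begin{itemize}
\item For $i<k$, the identity $s_k\delta_i=\delta_i s_{k-1}$ makes these terms cancel the first sum of $\mathrm{d}\kappa x$.
\item For $i\ge k+2$, the identity $s_k\delta_i=\delta_{i-1}s_k$, after reindexing $i\mapsto i-1$, makes these terms cancel the second sum.
\item The single remaining term is $s_k\delta_k x_k=x_k$.
\end{itemize}
The cases $k=0$, $k=n$ and $n=0$ are just empty sums. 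With this computation in place of your second and final steps, the argument is complete.
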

\begin{proof}
Left to the reader. 
\end{proof}

\begin{DEF}\label{KEYDEF}
We define the following cosimplicial dg-module over $C^{\bullet}(L|K)$ (Amitsur's cosimplicial ring, cf.\@ Definition~\ref{DEFAMITSUR}):
\[  A^{\bullet}(L|K) := \left( \vcenter{ \xymatrix{  
W'_{K} \otimes_{K} K[\Delta^{\bullet}] \otimes_{K} C^{\bullet}(L|K)  \ar[d]^{\mathrm{d}_A} \\
 V_K \otimes_K C^{\bullet}(L|K) 
}} \right). \]
The differential $\mathrm{d}_A$ is given on the $i$-th summand by $\iota \otimes_{L,i} C^{n}(L|K)$ (cf.\@ \ref{PARSETTING}) using the {\em canonical} identifications
\begin{eqnarray*}  
W_K' \otimes_K [i] \otimes_K C^{n}(L|K) \cong W_K'  \otimes_K L \otimes_{L,i} C^{n}(L|K) &\cong& W' \otimes_{L,i} C^{n}(L|K),  \\
 V_K \otimes_K C^{n}(L|K) \cong V_K \otimes_K L \otimes_{L,i} C^{n}(L|K)  &\cong& V \otimes_{L,i} C^{n}(L|K).  
\end{eqnarray*}
\end{DEF}
One checks that this is compatible with the coface and codegeneracy maps. 
For $K=L$, we get just
\[ A^{\bullet}(L):= A^{\bullet}(L|L)  =  \left( \vcenter{ \xymatrix{  W' \otimes_L L[\Delta^{\bullet}] \ar[d] \\ V } } \right) \]
with $W'$ and $V$ considered as constant cosimplicial $L$-vector spaces and the differential being induced by the inclusion and the
augmentation $L[\Delta^{\bullet}] \to L$. 

\begin{PAR}We will be interested in the following cosimplicial dg-modules over $K$: 
\[ S^m_{C^{\bullet}(L|K)} A^{\bullet}(L|K). \]
The notation means the cosimplicial dg-module obtained by
applying $S^m_{C^{n}(L|K)}$ (symmetric tensor product of dg-modules over $C^{n}(L|K)$) simplex-wise. In other words: the cosimplicial (i.e.\@ simplex-wise) tensor horizontally and the graded tensor vertically. Using Dold-Kan and Eilenberg-Zilber this could be translated to bicosimplicial objects, but we will abstain from doing so. 
\end{PAR}

\begin{LEMMA}\label{LEMMAGALOIS}
Let $L|K$ be a Galois extension of characteristic 0 with Galois group $G$. We have a commutative diagram
\[ \xymatrix{
W'_{K} \otimes_{K} K[\Delta^{n}] \otimes_{K} C^{\bullet}(L|K)  \ar[r]^-{\sim} \ar[d]_{\mathrm{d}_A} & \Hom(G^n, W')^{n+1}  \ar[d]^{\iota, {}^{\sigma_1} \iota, \dots, {}^{\sigma_1 \dots \sigma_n} \iota} \\
V_K \otimes_K C^{\bullet}(L|K)  \ar[r]^-{\sim}  &  \Hom(G^n, V)
}\] 
where the horizontal isomorphisms are induced by the ones from Proposition~\ref{PROPGALOIS} and
the value of the right hand side vertical map is, more precisely, given for $(a_0, \dots, a_n)$ by
\[  (\sigma_1, \dots, \sigma_n) \mapsto \iota(a_0(\sigma_1, \dots, \sigma_n)) + \sum_{i=1}^n \sigma_1 \cdots \sigma_i \iota \sigma_i^{-1} \cdots \sigma_1^{-1} a_i(\sigma_1, \dots, \sigma_n) .  \]
\end{LEMMA}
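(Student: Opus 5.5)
The plan is to define the horizontal isomorphisms explicitly, then check commutativity on each of the $n+1$ summands of $W'_K \otimes_K K[\Delta^n] \otimes_K C^n(L|K)$ separately. Both sides are additive in the summands, so it is enough to compare the two compositions on each $W'_K\otimes_K [i]\otimes_K C^n(L|K)$.

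First I fix the horizontal maps. The bottom one is $V_K \otimes_K C^n(L|K) \cong V_K\otimes_K \Hom(G^n,L) \cong \Hom(G^n, V)$. It is induced by Proposition~\ref{PROPGALOIS} and uses $V = V_K\otimes_K L$; the evaluation $v\otimes a \mapsto (\underline\sigma \mapsto v\, a(\underline\sigma))$ is an isomorphism because $\Hom(G^n,-)$ is a finite product. For the top map, the $i$-th summand $W'_K \otimes_K [i] \otimes_K C^n(L|K)$ is sent to the $i$-th factor $\Hom(G^n,W')$ in the same way, via $W' = W'_K\otimes_K L$. Explicitly, $w\otimes l_0\otimes\cdots\otimes l_n$ goes to $\underline\sigma\mapsto w\, l_0\,{}^{\sigma_1}l_1\cdots{}^{\sigma_1\cdots\sigma_n}l_n$. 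Here $W'_K$ is only a $K$-form of $W'$, and $\iota$ is not assumed defined over $K$. Since the statement speaks of ${}^{\sigma}\iota$, I interpret the Galois action on $W'$ and $V$ via these $K$-forms, namely ${}^\sigma(w\otimes l) = w\otimes {}^\sigma l$, and set ${}^\sigma\iota := \sigma\circ\iota\circ\sigma^{-1}$. This agrees with the formula in the statement.

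Next I compute $\mathrm{d}_A$ on a pure tensor $w\otimes l_0\otimes\cdots\otimes l_n$ in the $i$-th summand. By Definition~\ref{KEYDEF}, we first rewrite it as $(w\otimes l_i)\otimes_{L,i}(l_0\otimes\cdots\otimes 1\otimes\cdots\otimes l_n)$, with the $1$ in slot $i$. We then apply $\iota$ to $w\otimes l_i\in W'$ and rewrite $\iota(w\otimes l_i) = \sum_k v_k\otimes \mu_k$ with $v_k\in V_K$. The result is $\sum_k v_k\otimes l_0\otimes\cdots\otimes \mu_k\otimes\cdots\otimes l_n$, with $\mu_k$ in slot $i$. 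Under the bottom isomorphism this becomes
\[ \underline\sigma\mapsto \sum_k v_k\, \Big(\prod_{j\neq i} {}^{\sigma_1\cdots\sigma_j}l_j\Big)\, {}^{\sigma_1\cdots\sigma_i}\mu_k . \]
Now write $\tau=\sigma_1\cdots\sigma_i$. Then $\sum_k v_k\otimes{}^{\tau}\mu_k = \tau(\iota(w\otimes l_i)) = (\tau\iota\tau^{-1})(w\otimes {}^\tau l_i)$. The scalar $\prod_{j\ne i}{}^{\sigma_1\cdots\sigma_j}l_j$ can be pulled inside because $\tau\iota\tau^{-1}$ is $L$-linear: it is $\tau$-semilinear composed with $\tau^{-1}$-semilinear. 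So the value equals $(\tau\iota\tau^{-1})(a_i(\underline\sigma))$, where $a_i$ is the image of our element under the top isomorphism, $a_i(\underline\sigma) = w\, l_0\,{}^{\sigma_1}l_1\cdots{}^{\sigma_1\cdots\sigma_n}l_n$. For $i=0$ we have $\tau=1$, which gives the term $\iota(a_0(\underline\sigma))$. Summing over $i$ yields exactly the claimed formula.

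The only delicate step is keeping track of the semilinearity bookkeeping. One must check that identifying $W'_K\otimes_K L\otimes_{L,i}C^n$ with $W'\otimes_{L,i}C^n$ matches the $i$-th module structure $\lambda\circ_i a = {}^{\sigma_1\cdots\sigma_i}\lambda\, a$ noted after Proposition~\ref{PROPGALOIS}. That observation is precisely what converts $\iota$ into its conjugate $\tau\iota\tau^{-1}$. Everything else is a routine expansion on pure tensors, using linearity and the fact that the maps are isomorphisms by Proposition~\ref{PROPGALOIS}.
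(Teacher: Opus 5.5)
Your proposal is correct and is essentially the intended argument: the paper leaves this lemma to the reader, and your summand-by-summand check on pure tensors is exactly that verification. You rewrite via $\otimes_{L,i}$, apply $\iota$, and use that the $i$-th module structure is twisted by $\tau=\sigma_1\cdots\sigma_i$, which turns $\iota$ into the $L$-linear conjugate $\tau\iota\tau^{-1}$. Your reading of ${}^{\sigma}\iota$ through the chosen $K$-forms $W'_K$, $V_K$ is the right one, and so is the implicit finiteness of $G$, which Proposition~\ref{PROPGALOIS} already requires.
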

\begin{proof}Left to the reader. \end{proof}

\begin{LEMMA}\label{LEMMACOHA}
Let $L|K$ be a field extension of characteristic 0. 
\[ H^n \left(\Tot(S^i_{C^{\bullet}(L|K)}A^{\bullet}(L|K)) \right) \cong \begin{cases} \Lambda^n_K V & n = i, \\ 0 & \text{otherwise},  \end{cases}   \]
and for $K=L$, the filtration induced by the columns filtration of $S^n_L A^{\bullet}(L)$ is the filtration (\ref{eqhodge}). 
\end{LEMMA}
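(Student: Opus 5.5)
The idea is to treat the vertical (dg) direction and the cosimplicial direction separately. I place the bottom row $V_K\otimes_K C^{\bullet}(L|K)$ in odd vertical degree, so that it contributes exterior powers, and the top row in even degree. Simplex-wise, $S^i_{C^n(L|K)}$ of a two-term complex then splits as
\[ \bigoplus_{j=0}^{i} S^j_{C^n}\bigl(W'_K\otimes_K K[\Delta^n]\otimes_K C^n(L|K)\bigr)\otimes_{C^n}\Lambda^{i-j}_{C^n}\bigl(V_K\otimes_K C^n(L|K)\bigr), \]
and these pieces are compatible with cofaces and codegeneracies. I filter $\Tot(S^i_{C^\bullet(L|K)}A^\bullet(L|K))$ by the number $j$ of top factors. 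Since $\mathrm{d}_A$ lowers $j$ by one, this is a finite filtration by subcomplexes whose graded pieces carry only the cosimplicial differential.

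\emph{Pieces with $j\ge 1$.} For $U$ a $K$-vector space and $C=C^\bullet(L|K)$ one has $S^j_C(U\otimes_K K[\Delta^\bullet]\otimes_K C)\cong (U^{\otimes j}\otimes_K K[(\Delta^\bullet)^{j}]\otimes_K C)_{S_j}$. The unnormalized cochain complex of $K[\Delta^\bullet]\otimes_K Y^\bullet$ is acyclic for every cosimplicial $K$-vector space $Y^\bullet$ (Lemma~\ref{LEMMACONTRACTDELTA}). Taking $Y^\bullet = K[(\Delta^\bullet)^{j-1}]\otimes(\text{rest})$ shows that $K[(\Delta^\bullet)^j]\otimes(\text{rest})$ is acyclic. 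In characteristic $0$, coinvariants under $S_j$ form a direct summand of a complex, so they remain acyclic. Hence every graded piece with $j\ge1$ is acyclic.

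\emph{The piece $j=0$.} Here the graded piece is $\Lambda^i_K V_K\otimes_K C^\bullet(L|K)$. By Proposition~\ref{PROPAMITSUR} (with $L'=K$ and $X^\bullet$ constant) its cohomology is $\Lambda^i_K V_K$, concentrated in cosimplicial degree $0$, hence in total degree $i$. A finite filtration all of whose graded pieces except one are acyclic yields the first claim: the cohomology is $\Lambda^i_K V_K$ (the $\Lambda^n_K V$ of the statement) in degree $n=i$ and $0$ otherwise.

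\emph{The filtration for $K=L$.} I would show that the columns filtration on $H^n(\Tot S^n_L A^\bullet(L))\cong\Lambda^n_L V$ satisfies $F^p=\Lambda^p_L W'\otimes_L\Lambda^{n-p}_L V$. I would do this in two steps.
\begin{enumerate}
\item \emph{Containment $\supseteq$, by explicit cocycles.} For $w\in W'$, the element $w\otimes([1]-[0])$ in column $1$, top row, is a cocycle: its vertical differential vanishes because the augmentation kills $[1]-[0]$. Moreover it is cohomologous to $\iota(w)$ in column $0$, bottom row, via $w\otimes[0]$. Products of $p$ such classes with $n-p$ elements of $V$ in column $0$ are computed by the Alexander–Whitney cup product on the cosimplicial side. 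They give representatives of $w_1\wedge\cdots\wedge w_p\wedge v_{p+1}\wedge\cdots\wedge v_n$ lying in columns $\ge p$.
\item \emph{Equality, by comparing gradeds.} I would compute $E_1$ of the columns spectral sequence. In column $p$ the vertical complex is $S^n_L$ of $W'^{\,p+1}\to V$, with the map given by summation. This complex is quasi-isomorphic to $S^n_L\bigl(W'^{\,p}[\text{even}]\oplus W[\text{odd}]\bigr)$, since the kernel of summation is $W'^{\,p}$ and its cokernel is $W$. I would then compute the $E_2$ row cohomology using the cosimplicial structure of $W'^{\,p}$, which is the normalized part of $W'\otimes L[\Delta^p]$. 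The goal is to show $E_\infty^{p,n-p}\cong\Lambda^pW'\otimes\Lambda^{n-p}W$, which by step~1 matches $\gr^p$ of (\ref{eqhodge}).
\end{enumerate}
Dimension counting then forces equality in step~2.

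I expect the main obstacle to be this last identification, namely showing that the spectral sequence has no extra contributions. My first attempt would be to avoid spectral sequences altogether: replace $L[\Delta^\bullet]$ by its normalized (two-step) model, so that the relevant part of $\Tot$ becomes a Koszul-type complex $\Lambda^\bullet W'\otimes\Lambda^\bullet V$ on which the columns filtration can be read off directly.
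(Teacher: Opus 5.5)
Your proof of the first assertion is correct and is the paper's argument. The filtration by the number of $W'$-factors is the row filtration. Rows containing a $W'_K\otimes_K K[\Delta^\bullet]$ factor are acyclic by Lemma~\ref{LEMMACONTRACTDELTA}, with the $S_j$-coinvariants a direct summand in characteristic $0$. The pure-$V$ row is handled by Proposition~\ref{PROPAMITSUR}. Your Step~1 for the filtration is also correct and yields $\Lambda^p_LW'\wedge\Lambda^{n-p}_LV\subseteq F^p_{\mathrm{col}}$.

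The gap is Step~2. It carries the entire reverse inclusion, and you only formulate it as a goal. After your identification $E_1^{p,q}\cong S^{n-q}_L(W'\otimes_L L[\Delta^p]_0)\otimes_L\Lambda^q_LW$, everything hinges on
\[ H^p\bigl(S^k_L(W'\otimes_L L[\Delta^\bullet]_0)\bigr)\cong\begin{cases}\Lambda^k_LW' & p=k,\\ 0 & p\neq k,\end{cases} \]
and your proposal gives no argument for this. It is not formal. $L[\Delta^\bullet]_0$ has cohomology $L$ in degree $1$, and passing from this to a simplex-wise symmetric power is an Eilenberg--Zilber/K\"unneth statement: the Koszul sign on degree-$1$ classes is what turns symmetric coinvariants into $\Lambda^k$. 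For the same reason, your fallback of replacing $L[\Delta^\bullet]$ by its two-term normalized model does not work as stated, since normalization does not commute with simplex-wise tensor products. On the other hand, the obstacle you anticipate is not a real one. Once the display holds, $E_2$ is concentrated in total degree $n$, so $E_2=E_\infty$, and your dimension count finishes the proof.

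There are two ways to supply the missing fact. The paper applies Eilenberg--Zilber and K\"unneth to the whole double complex, giving $H^\bullet(\Tot S^n_LA^\bullet(L))\cong S^n_LH^\bullet(\Tot A^\bullet(L))$ with the induced filtration. This reduces everything to $n=1$, where the $E^2$-page is just $W$ in position $(0,1)$ and $W'$ in position $(1,0)$, and it makes your cocycle computation unnecessary.

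Closer to your own method, you can use the Koszul complex, which is exact in characteristic $0$:
\[ 0\to S^kA\to S^{k-1}A\otimes\Lambda^1B\to\cdots\to\Lambda^kB\to\Lambda^kC\to0 \]
Take $A=W'\otimes_LL[\Delta^\bullet]_0\subseteq B=W'\otimes_LL[\Delta^\bullet]$ and $C=B/A=W'$, which is constant. Every term involving $\Lambda^{\ge1}B$ is a direct summand of some $L[\Delta^\bullet]\otimes_LY^\bullet$, hence acyclic by Lemma~\ref{LEMMACONTRACTDELTA}. Therefore $S^kA$ is quasi-isomorphic to $\Lambda^k_LW'[-k]$, which is exactly the displayed fact.
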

\begin{proof}
We have by definition:
\[ (S^n_{C^{\bullet}(L|K)}A^{\bullet}(L|K))_i = S^{n-i}_{C^{\bullet}(L|K)} (A^{\bullet}(L|K)_0) \otimes_{C^{\bullet}(L|K)} \Lambda^{i}_{C^{\bullet}(L|K)} (A^{\bullet}(L|K)_1) . \]
Thus by Lemma~\ref{LEMMACONTRACTDELTA} the associated unnormalized cochain complexes are acyclic unless $i=n$. Furthermore, we have
\[ \Lambda^n_{C^{\bullet}(L|K)} (A^{\bullet}(L|K)_1) \cong \Lambda^n_K V_K \otimes_K C^{\bullet}(L|K).  \]
From the acyclicity of $K \to C^{\bullet}(L|K)$ (Proposition~\ref{PROPAMITSUR}) follows that the cohomology of the  associated unnormalized cochain complex is concentrated in degree 0 
and equal to $\Lambda^n_K V_K$. The row spectral sequence thus degenerates and gives the first assertion. 

Over a field (case $L=K$), we can use the Eilenberg-Zilber and K\"unneth theorems. Therefore $H^\bullet(S^n_L A^{\bullet}(L))$ is just 
$S^n_L H^{\bullet}(A^{\bullet}(L))$ together with its induced filtration. Hence it suffices to prove the statement for $n=1$. 
The $E^2$-page of the columns spectral sequences then is
\[ \xymatrix{ 0 & W' & 0   & \cdots \\
W & 0 &  0 & \cdots } \]
and thus the induced filtration is indeed (\ref{eqhodge1}). 
\end{proof}

\begin{BEM}
A word of caution: 
The analogous identification with the symmetric tensor product of dg-modules over the dg-algebra associated with $C^{\bullet}(L|K)$ works only in the derived sense. 
However, we will be interested in the {\em underived} tensor product over $C^{\bullet}(L|K)$ so we will never use this connection. 
\end{BEM}

We now relate the above abstract cosimplicial dg-modules with the  sheaf resolutions (Proposition~\ref{PROPRES}) and their associated filtrations $F_{\OOO_X}^{\bullet}$.
\begin{PROP}\label{PROPCOMP}
Let $X$ be a complex torus with Hodge structure $V_\Q = H^1(X, \Q)$, $W = H^1(X, \OOO_X)$, $W' = H^0(X, \Omega^1_X)$. 
Let $K \subseteq \C$ be a subfield. 

There is a  morphism of filtered vector spaces (functorial in $K$)
\[ H^n \left(\Tot(S^n_{C^{\bullet}(\C|K)} A^{\bullet}(\C|K)) \right) \to H^n(X, K),  \]
which is an isomorphism on the underlying vector spaces\footnote{not necessarily strict, i.e.\@ not necessarily an isomorphism of filtered vector spaces} for  
the filtration $F_{\OOO_X}^{\bullet}$ on the right hand side and the one induced by the
column filtration 
 on the left. 
It is strict for $K=\C$.
\end{PROP}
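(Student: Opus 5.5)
The plan is to build an explicit comparison between the cosimplicial dg-module $S^n_{C^{\bullet}(\C|K)} A^{\bullet}(\C|K)$ and a double complex computing the hypercohomology of the resolution $0\to K_X\to \OOO_X\to T^2_K\OOO_X\to\cdots$ of Proposition~\ref{PROPRES}. The key is to pass through the universal cover $\pi:\C^g\to X$ and the lattice $\Gamma=H_1(X,\Z)$. On $\C^g$ the linear functions give a $\C$-linear map $V^\vee\to\OOO(\C^g)$; dually, $V=H^1(X,\C)$ is realised by affine-linear functions on $\C^g$ modulo constants. Holomorphic linear functions are the elements of $W'=H^0(X,\Omega^1_X)$ (more precisely, their primitives), and the $\Gamma$-translation behaviour of such a primitive is recorded by the $\Q$-structure $V_\Q$.

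\textbf{Step 1: the comparison for $n=1$.} I would use group cohomology $H^\bullet(\Gamma,-)$ of $\Gamma$-modules of global sections on $\C^g$, which computes sheaf cohomology on $X$ for the acyclic sheaves $T^j_K\OOO$ on the Stein space $\C^g$. A natural strategy is to map the two-term column of $A^{\bullet}(\C|K)$ into $T^{\bullet+1}_K\OOO(\C^g)$. A primitive $f$ of $\omega\in W'$ placed in slot $i$ of $K[\Delta^n]$ goes to $1\otimes\cdots\otimes f\otimes\cdots\otimes 1$ in $T^{n+1}_K\OOO(\C^g)$. The $C^n(\C|K)=T^{n+1}_K\C$ factor multiplies componentwise, which is exactly the module structure of Proposition~\ref{PROPSYMMETRIC}. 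The bottom row $V_K\otimes C^\bullet$ then maps to cochains of $\Gamma$ with values in constants via the period homomorphism $V_K\cong\Hom(\Gamma,K)$. Compatibility with $\mathrm{d}_A$ holds because $\gamma^*f-f$ is the constant period $\langle\omega,\gamma\rangle$, and this decomposes in $V=V_K\otimes\C$ according to the embedding $\iota$ used on the $i$-th $L$-structure. I would check coface compatibility directly from the formulas defining $\delta_i$ in both objects.

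\textbf{Step 2: extend to $S^n$ and identify cohomology.} I would apply $S^n$ simplex-wise and use Proposition~\ref{PROPSYMMETRIC} to land in $T^{n+1}_K(S^\bullet)$, i.e.\ in tensor powers of $\OOO(\C^g)$ of the appropriate degree, using multiplicativity of the sheaf resolution. Lemma~\ref{LEMMACOHA} gives $H^n(\Tot)\cong\Lambda^n_K V_K\cong H^n(X,K)$. I would then check that the induced map is the identity on the generators $\Lambda^n_K H^1$ via cup products, compatibly with the $n=1$ case. Since the cohomology ring of a torus is exterior on $H^1$, this reduces the isomorphism claim to $n=1$. There it is the statement that $V_K\to H^1(X,K)$ is the period isomorphism.

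\textbf{Step 3: filtrations.} The column filtration of the left-hand side goes to the column filtration of the hypercohomology double complex, because the constructed map preserves the cosimplicial degree. Hence the map is filtered, and it is functorial in $K$ since every ingredient is. For $K=\C$, Proposition~\ref{PROPAMITSUR} with $L'=L=K$ (trivially, $C^\bullet(\C|\C)$ is constant) reduces the left-hand side to $A^\bullet(\C)$. By Lemma~\ref{LEMMACOHA} the filtration there is (\ref{eqhodge}), i.e.\ the Hodge filtration, and by Proposition~\ref{PROPL} the right-hand side filtration is also the Hodge filtration. A filtered isomorphism between two filtrations that agree is strict.

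\textbf{Main obstacle.} The hardest point will be Step 1: making the map genuinely a morphism of cosimplicial objects over the Amitsur ring when $K\neq\C$. The issue is that the $i$-th $\C$-module structure must match the multiplication of the $i$-th tensor factor of $\OOO(\C^g)$, and also, via the group-cohomology side, the $\Gamma$-action. The first thing to try is the Čech/bar description in which the $\Gamma$-cochain index and the cosimplicial index are combined by Eilenberg--Zilber. Failing that, I would avoid an explicit chain map and compare $E_1$-terms column by column instead.
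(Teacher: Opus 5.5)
Your route differs from the paper's: you compare with inhomogeneous $\Gamma$-cochains with coefficients in $T^{\bullet+1}_K\OOO(\C^g)$, whereas the paper uses the Dolbeault resolution. Your $n=1$ map in Step~1 is correct. The identity $\gamma^*f-f=\langle\omega,\gamma\rangle$, distributed over $V=V_K\otimes_K\C$ via the $i$-th $\C$-structure, is exactly compatibility with $\mathrm{d}_A$, and cofaces and codegeneracies match.

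\textbf{Step~2 does not go through.} First, Proposition~\ref{PROPSYMMETRIC} does not apply to $A^\bullet(\C|K)$. Its degree-one part $V_K\otimes_K C^m(\C|K)$ is a single copy, not $\bigoplus_i V\otimes_{\C,i}C^m(\C|K)$, so that proposition says nothing about $S^n_{C^m(\C|K)}A^m(\C|K)$.

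Second, and more seriously, your $n=1$ map has no naive multiplicative extension. The reason is that $\gamma^*$ acts by ring automorphisms and the group coboundary is not a derivation. Concretely, in column $0$ take $w_1w_2\in S^2_\C W'$ with linear primitives $f_1,f_2$ and periods $c_k(\gamma)=\gamma^*f_k-f_k$. The coboundary of $f_1f_2$ is $\gamma\mapsto c_1(\gamma)f_2+f_1c_2(\gamma)+c_1(\gamma)c_2(\gamma)$. But $\mathrm{d}(w_1w_2)=\iota(w_1)w_2+w_1\iota(w_2)$ maps under your degree-one rule only to the first two terms. (Bar cochains with cup product are not strictly graded-commutative anyway.) Correction terms, or a Chevalley--Eilenberg model in which infinitesimal translations act as derivations on polynomial functions, might repair this, but that is exactly the work that is missing. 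Without a genuine multiplicative morphism you also lose the cup-product reduction to $n=1$. Your fallback of comparing $E_1$-terms column by column gives no filtered map on the abutments at all.

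\textbf{Acyclicity.} Independently, the claim that $T^j_K\OOO$ is acyclic on the Stein space $\C^g$ is unsupported. Theorem~B concerns coherent sheaves, and $T^j_K\OOO$ for $j\ge 2$ is not coherent. So it is not clear that your double complex computes $H^n(X,K)$ together with $F^\bullet_{\OOO_X}$; at best you would have to construct a filtered edge map.

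\textbf{How the paper avoids both problems.} It uses the simultaneous resolution $T^\bullet_K(\mathcal{E}^{0,\bullet}_X)$. Its terms are modules over the fine sheaf $\mathcal{E}^{0,0}_X$, hence acyclic, and its global sections on $X$ are just $\Gamma$-invariants of sections on $V_\R$, so no group cohomology occurs. The dg-direction is $\overline{\partial}$, a derivation, and one restricts to the strictly commutative polynomial sub-dg-algebra $S^\bullet_\C[V\to W]$.

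The key device you are missing is the auxiliary object $B^\bullet(L|K)$. It carries $K[\Delta^\bullet]$ in both degrees, so Proposition~\ref{PROPSYMMETRIC} yields $S^\bullet_{C^\bullet(L|K)}B^\bullet(L|K)\cong T^\bullet_K(S^\bullet_L B)$. Its $\Gamma$-invariants are computed explicitly: replace $K[\Delta^\bullet]$ by $K[\Delta^\bullet]_0$ in degree $0$. These invariants are linked to $S^n_{C^\bullet(L|K)}A^\bullet(L|K)$ by a zig-zag of column-wise quasi-isomorphisms, which survive $S^n$ by flatness over $C^m(L|K)$.

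Your Step~3 agrees with the paper: the map preserves filtrations, and for $K=\C$ both sides carry the Hodge filtration, which gives strictness.
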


\begin{BEM}\label{REMCOMP}
We get a commutative diagram of filtered vector spaces: 
\[ \xymatrix{ 
H^n \left(\Tot(S^n_{C^{\bullet}(\C|\Q)} A^{\bullet}(\C|\Q)) \right)  \ar[r]^-{\sim} \ar@{^{(}->}[d] & H^n(X, \Q) \ar@{^{(}->}[d] \\
H^n \left(\Tot(S^n_{\C}  A^{\bullet}(\C)) \right)  \ar[r]^-{\sim}  & H^n(X, \C)
}  \]
where the bottom horizontal morphism is an isomorphism of filtered vector spaces. 
Hence the strictness of the left vertical morphism would imply the strictness of the right vertical morphism and then, a posteriori, also 
of the top horizontal morphism.
\end{BEM}

\begin{proof}[Proof of Proposition~\ref{PROPCOMP}]
We have a resolution
\[ \xymatrix{ 0 \ar[r] & \OOO_X \ar[r] & \mathcal{E}^{0,0}_X \ar[r]^{\overline{\partial}}  &  \mathcal{E}^{0,1}_X \ar[r]^{\overline{\partial}} &  \mathcal{E}^{0,2}_X \ar[r]^{\overline{\partial}}  & \cdots } \]
where $\mathcal{E}^{p,q}_X$ is the sheaf of $C^{\infty}$-differential forms on $X$ of degree $p,q$.
We thus obtain a double complex 
\begin{equation} \label{eqdc1} \xymatrix{  \mathcal{E}^{0,0}_X \ar[r]  \ar[d] & [T^2_K( \mathcal{E}_X^{0,\bullet})]^0 \ar[r] \ar[d]  & [T^3_K( \mathcal{E}_X^{0,\bullet})]^0 \ar[r]  \ar[d] & \\
 \mathcal{E}^{0,1}_X \ar[r]  \ar[d] & [T^2_K( \mathcal{E}_X^{0,\bullet})]^1 \ar[r] \ar[d]  & [T^3_K( \mathcal{E}_X^{0,\bullet})]^1 \ar[r] \ar[d]  &  
  \\
 \mathcal{E}^{0,2}_X \ar[r]  \ar[d] & [T^2_K( \mathcal{E}_X^{0,\bullet})]^2 \ar[r]  \ar[d] & [T^3_K( \mathcal{E}_X^{0,\bullet})]^2 \ar[r]  \ar[d] &  \\ \vdots & \vdots & \vdots } 
 \end{equation}
 which is a simultaneous resolution of $\OOO_X \to T^2_{K} \OOO_X \to T^3_{K} \OOO_X \to \cdots$.
 The double complex (\ref{eqdc1}) is the unnormalized cochain complex of a cosimplicial dg-algebra sheaf
\[ T_K^{\bullet} (\mathcal{E}^{0,\bullet}_X) :=  \left( \xymatrix{  \mathcal{E}^{0,\bullet}_X \ar@<3pt>[r]  \ar@<-3pt>[r]  \ar@{<.}[r]  &  T_K^2 (\mathcal{E}^{0,\bullet}_X) \ar@<-6pt>[r] \ar[r] \ar@<6pt>[r]  \ar@<-3pt>@{<.}[r]  \ar@<3pt>@{<.}[r]  \ar[r] & \cdots  } \right). \]
The sheaves $[T^i_K( \mathcal{E}^{0,\bullet}_X)]^k$ which are direct sums of tensor products of
\[   T_K^l \mathcal{E}^{0,m}_X \] 
are again flasque. Hence applying global sections to this
double complex yields a double complex whose columns filtration determines the  filtration $F_{\OOO_X}^{\bullet}$ on $H^n(X, K)$. 

Recall that we have isomorphisms
\[ X \cong \Gamma \backslash V_\R^* \cong \Gamma \backslash V^* / W^*,  \]
where $\Gamma \subset V_{\Q}^*$ is a suitable lattice and where $V_{\R}$ obtains its complex structure from the $\R$-isomorphism with $V^* / W^*$.

We have similar sheaves also on $V_{\R}$  (considered as complex manifold)
and a cosimplicial dg-algebra sheaf
\begin{equation} \label{eqdc2} 
T_K^{\bullet} (\mathcal{E}^{0,\bullet}_{V_{\R}}) :=  \left( \xymatrix{  \mathcal{E}^{0,\bullet}_{V_{\R}} \ar@<3pt>[r]  \ar@<-3pt>[r]  \ar@{<.}[r]  &  T_K^2 (\mathcal{E}^{0,\bullet}_{V_{\R}}) \ar@<-6pt>[r] \ar[r] \ar@<6pt>[r]  \ar@<-3pt>@{<.}[r]  \ar@<3pt>@{<.}[r]  \ar[r] & \cdots  } \right) 
\end{equation}
and taking global sections of (\ref{eqdc1}) is the same as taking global sections of (the unnormalized cochain complex of) (\ref{eqdc2}) followed by taking $\Gamma$-invariants. 

 We will now restrict our attention to the sub-algebra of {\em polynomial} anti-holomorphic forms  $S^{\bullet}_{\C} B \subset \mathcal{E}^{0,\bullet}(V_{\R})$. More precisely, 
consider the dg-module 
\[ B := \left( \vcenter{ \xymatrix{ V \ar[d]^{\mathrm{d}} \\ W }} \right) \]
with $V$ in degree 0 and $W$ in degree 1 and
consider the symmetric dg-algebra $S^{\bullet}_{\C} B$. 
On it the lattice $\Gamma \subset V_{\Q}^*$ acts by mapping $v \mapsto v + \lambda(v)$ where $\lambda(v) \in \C$ is considered to be in $S^0 B \cong \C$, and acts trivially on $W$.
We have a unique embedding 
\[ S^{\bullet}_{\C} B \rightarrow \mathcal{E}^{0,\bullet}(V_{\R}) \]
given by mapping $S^0_{\C} B = \C$ to constant functions, and defined on $B$ by mapping $v \in V_{\C}$ to $v$ considered as $\C$-valued function $V_{\R}^* \to \C$ and compatible with $\mathrm{d}$ in $B$ and $\overline{\partial}$ on 
$\mathcal{E}^{0,\bullet}(V_{\R})$. It other words, it maps an element in 
$W$ to the corresponding anti-holomorphic differential form. Note that $\mathcal{E}^{0,\bullet}$ is also a differential graded commutative algebra, so this association extends to $S^{\bullet}_{\C} B$.  
It is $\Gamma$-equivariant. 

This construction extends to a $\Gamma$-equivariant 
morphism of cosimplicial dg-modules
\[ \xymatrix{ K \ar[d]  \ar[r] & S_{\C}^{\bullet} B \ar[d]  \ar@<3pt>[r] \ar@<-3pt>[r] \ar@{<.}[r] & T_K^2 (S_{\C}^{\bullet} B) \ar[d]  \ar@<-6pt>[r] \ar[r] \ar@<6pt>[r] \ar@<-3pt>@{<.}[r] \ar@<3pt>@{<.}[r] \ar[r] & \cdots  \\
 K \ar[r] & \mathcal{E}^{0,\bullet}(V_{\R}) \ar@<3pt>[r]  \ar@<-3pt>[r]  \ar@{<.}[r]  &  T_K^2 (\mathcal{E}^{0,\bullet}(V_{\R})) \ar@<-6pt>[r] \ar[r] \ar@<6pt>[r]  \ar@<-3pt>@{<.}[r]  \ar@<3pt>@{<.}[r]  \ar[r] & \cdots  }\]

Proposition~\ref{PROPSYMMETRIC} leads to consider the cosimplicial dg-module (that we define here for an arbitrary field extension $L|K$):
\begin{equation}\label{eqB} B^{\bullet}(L|K) := \left( \vcenter{ \xymatrix{  V_K \otimes_K K[\Delta^{\bullet}] \otimes_K C^{\bullet}(L|K) \ar[d] \\
W_K \otimes_K K[\Delta^{\bullet}] \otimes_K C^{\bullet}(L|K)
}}\right) 
\end{equation}
where $W_K$ is any $K$-form of $W$, 
and it states that there is an isomorphism of cosimplicial dg-modules
\[ S_{C^{\bullet}(L|K)}^{\bullet} B^{\bullet}(L|K) \cong  T_K^{\bullet} (S_L^{\bullet} B^{\bullet}).   \]
 Thus, for $L=\C$, its composition with the embedding into $ T_K^{\bullet}  \mathcal{E}^{0,\bullet}(V_{\R})$ induces a morphism of spectral sequences from the columns spectral sequence associated with $(S_{C^{\bullet}(\C|K)}^{n} B^{\bullet}(\C|K))^{\Gamma}$ (for any $n$) to 
 the hypercohomology spectral sequence associated with the resolution (cf.\@ Proposition~\ref{PROPRES}) of $K_X$.
 
 The following Lemmata relate the invariants $(S^n_{C^{\bullet}(L|K)} B^{\bullet}(L|K))^{\Gamma}$ with the abstractly defined cosimplicial dg-modules
 $S^n_{C^{\bullet}(L|K)} A^{\bullet}(L|K)$ which proves the statement of the Proposition, except for the strictness in the case $L=K$. 
 For the latter, observe that the isomorphism 
 \[ H^n \left(\Tot(S^n_{\C} A^{\bullet} (\C)) \right) \to H^n \left( \Tot(T_K^{\bullet} (\mathcal{E}^{0,\bullet}(X))) \right)   \]
is strict because the filtration on both sides is the Hodge filtration by Lemma~\ref{LEMMACOHA}, and by Proposition~\ref{PROPL}, respectively.
 \end{proof}

Recall from \ref{PARCANCOSIMP} the definition of $K[\Delta^{\bullet}]_0$.
\begin{LEMMA} We have
\[ { \footnotesize
 \left( S^{\bullet}_{C^\bullet(L|K)} \left( \vcenter{ \xymatrix{  V_K \otimes_K K[\Delta^{\bullet}] \otimes_K C^\bullet(L|K)  \ar[d] \\
W_K \otimes_K K[\Delta^{\bullet}] \otimes_K C^\bullet(L|K)
} } \right) \right)^{\Gamma} 
= 
S^{\bullet}_{C^\bullet(L|K)}  \left( \vcenter{ \xymatrix{  V_K \otimes_K K[\Delta^{\bullet}]_0 \otimes_K C^\bullet(L|K)  \ar[d] \\
W_K \otimes_K K[\Delta^{\bullet}] \otimes_K C^\bullet(L|K)
} } \right) } \]
\end{LEMMA}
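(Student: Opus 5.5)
The plan is to make the $\Gamma$-action explicit on the left-hand side, to show it acts on degree-$0$ generators by a unipotent translation that factors through the augmentation $K[\Delta^{n}] \to K$, and then to compute invariants of a polynomial algebra under translations. Everything can be done simplex-wise, for a fixed $n$. The right-hand side is visibly a sub-cosimplicial dg-module: $K[\Delta^{\bullet}]_0$ is a cosimplicial subspace, and the differential lands in the $W_K$-part, which is untouched.

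\emph{Step 1 (the action).} I transport the $\Gamma$-action through the isomorphism $S^{\bullet}_{C^n(L|K)} B^n(L|K) \cong T^{n+1}_K(S^{\bullet}_L B)$ of Proposition~\ref{PROPSYMMETRIC}, with $L=\C$. On $T^{n+1}_K(S^\bullet_\C B)$ the lattice acts diagonally, by $v \mapsto v+\lambda(v)$ in each factor and trivially on $W$. An element $v \otimes [i] \otimes c$ with $v \in V_K$ corresponds to $c\cdot(1\otimes\cdots\otimes v \otimes \cdots \otimes 1)$, with $v$ in position $i$. Since $\lambda(v) \in \Q \subseteq K$ for $v\in V_K$ (as $\Gamma \subset V_\Q^*$), this element is sent to the same element plus $\lambda(v)\, c \cdot 1$, and $\lambda(v)\,c \in C^n(L|K) = S^0$. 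Hence for a general degree-$0$ generator $x = \sum_i v_i \otimes [i] \otimes c_i$ we get
\[ \gamma(x) = x + \textstyle\sum_i \lambda_\gamma(v_i)\, c_i , \]
and this depends only on the image of $x$ under the augmentation $V_K\otimes K[\Delta^n]\otimes C^n \to V_K \otimes C^n$. The generators in $W_K \otimes K[\Delta^n]\otimes C^n$ are fixed. So $\Gamma$ acts by $C^n(L|K)$-algebra automorphisms fixing the symmetric algebra on $V_K\otimes K[\Delta^n]_0\otimes C^n$ and on the $W$-part. This gives the inclusion ``$\supseteq$''.

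\emph{Step 2 (splitting).} I split $K[\Delta^n] = K[\Delta^n]_0 \oplus K\cdot[0]$; this splitting need not respect the cosimplicial structure, which is harmless because the statement is simplex-wise. It yields a $\Gamma$-equivariant decomposition of algebras
\[ S^\bullet(\ldots) \cong R \otimes_{C^n} C^n[x_1,\dots,x_{2g}] , \]
where $R$ is the (trivially acted on) symmetric algebra on $V_K\otimes K[\Delta^n]_0\otimes C^n$ and the $W$-part, and $x_j = e_j \otimes [0] \otimes 1$. Here I choose the $\Q$-basis $e_j$ of $V_\Q$ dual to a $\Z$-basis $\gamma_1,\dots,\gamma_{2g}$ of $\Gamma$. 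Then $\gamma_k$ acts by $x_k \mapsto x_k+1$ and fixes the other $x_j$.

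\emph{Step 3 (invariants).} Since $R$ is free over $C^n$, hence a torsion-free $\Q$-module, it remains to show that $R[x_1,\dots,x_{2g}]^{\Gamma} = R$. For this I use finite differences in each variable separately. If $f$ has degree $d\ge1$ in $x_k$ with leading coefficient $a$, then $f(x_k+1)-f(x_k)$ has degree $d-1$ in $x_k$ with leading coefficient $d\,a \neq 0$, because $R$ is torsion-free in characteristic $0$. So an invariant $f$ has degree $0$ in every $x_k$, giving ``$\subseteq$''.

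The main obstacle is Step 1: one has to check carefully, via the formula in Proposition~\ref{PROPSYMMETRIC}, that the translation constant indeed lands in $S^0 = C^n(L|K)$ as $\lambda(v)c$, with the correct one of the $n+1$ module structures. It also needs to be confirmed that this is compatible with the identification of $V_K\otimes_K C^n$ with $V\otimes_{L,i} C^n$. The rest is formal.
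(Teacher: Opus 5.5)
Your proposal is correct and is essentially the paper's own argument. Step~1 is exactly the paper's computation that $\gamma$ sends $v\otimes[i]\otimes c$ to itself plus $\gamma(v)c$, independently of $i$. One small slip there: for $v\in V_K$ you only get $\lambda(v)\in K$, not $\Q$, but that is all the argument needs.

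Steps~2--3 are a coordinatized version of the paper's second step. The paper observes that $\gamma-\id$ lowers the tensor-degree filtration by one, and that on graded pieces it is the derivation extending $v\otimes[i]\otimes c\mapsto\gamma(v)c$. In your coordinates this derivation is $\partial/\partial x_k$, and in characteristic $0$ the joint kernel of these derivations is the right-hand side. Your finite-difference formulation has the small merit of spelling out the leading-term induction that the paper leaves implicit.
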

\begin{proof}
An element $\gamma \in \Gamma$ acts
trivially on $W_K \otimes_K K[\Delta^{\bullet}] \otimes_K C^\bullet(L|K)$ and
on $V_K \otimes_K K[\Delta^{\bullet}] \otimes_K C^\bullet(L|K)$ by mapping
\[v \otimes [i] \otimes l_0 \otimes \dots \otimes l_n\]
 to 
\[ v \otimes l_0 \otimes \dots \otimes l_n + l_0 \otimes \dots \otimes l_{i-1} \otimes l_i \gamma(v) \otimes l_{i+1} \otimes \cdots \otimes l_n  \]
\[ =  v \otimes l_0 \otimes \dots \otimes l_n  + \gamma(v) l_0 \otimes \dots \otimes l_n  \]
(using that $\gamma(v) \in K$), which is independent of $i$. Elements in the right hand side are thus invariant. 
Furthermore, the morphism $\gamma - \id$ shifts the filtration by tensor degree (not the dg-degree) by 1 and is {\em on graded pieces} given by the morphism
(abbreviating $C^\bullet(L|K)$ as $C^{\bullet}$) 
\begin{eqnarray*} 
 && \left( \Lambda^{n_1}_{C^\bullet}  W_K \otimes_K K[\Delta^\bullet] \otimes_K C^\bullet  \right)  \otimes
 \left( S^{n_2}_{C^\bullet}  V_K \otimes_K K[\Delta^\bullet] \otimes_K C^\bullet  \right) \\
&\to& 
 \left( \Lambda^{n_1}_{C^\bullet}  W_K \otimes_K K[\Delta^\bullet] \otimes_K C^\bullet  \right)  \otimes
\left( S^{n_2-1}_{C^\bullet}  V_K \otimes_K K[\Delta^\bullet] \otimes_K C^\bullet  \right)
 \end{eqnarray*}
obtained by extending the map
\[ V_K \otimes_K K[\Delta^n] \otimes_K C^n(L|K)   \to C^n(L|K)  \]
\[ v \otimes [i] \otimes l_0 \otimes \dots \otimes l_n \to \gamma(v) l_0 \otimes \dots \otimes l_n   \]
(independent of $i$)
as a derivation (in the sense of the symmetric algebra, not involving any degree). The set of maps $\gamma_i - 1$, where $\gamma_i$ runs through a basis of $\Gamma$, has thus joint kernel contained in
\[ \left( \Lambda^{n_1}_{C^\bullet(L|K)}  W_K \otimes_K K[\Delta^\bullet] \otimes_K L_\bullet \right)  \otimes
 \left( S^{n_2}_{C^\bullet(L|K)}  V_K \otimes_K K[\Delta^\bullet]_0 \otimes_K C^\bullet(L|K)  \right) \]
using that $K$ has characteristic zero. 
\end{proof}

 \begin{LEMMA}
 There is a chain of morphisms of cosimplicial dg-modules over $K$  (cf.\@ (\ref{eqB}) and Definition~\ref{KEYDEF})
  \[  (S^n_{C^{\bullet}(L|K)}B_{\bullet}(L|K))^{\Gamma}  \rightarrow \cdots \leftarrow S^n_{C^{\bullet}(L|K)} A_{\bullet}(L|K) \]
 that induce vertical quasi-isomorphisms between their double complexes of unnormalized cochains
and hence isomorphisms of columns spectral sequences from $E^1$ onwards. 
\end{LEMMA}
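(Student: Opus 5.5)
The plan is to connect both sides through a single intermediate cosimplicial dg-module $M^{\bullet}$ over $C^{\bullet}(L|K)$, with two monomorphisms
\[ (S^n_{C^{\bullet}(L|K)}B^{\bullet}(L|K))^{\Gamma} \hookrightarrow S^n_{C^{\bullet}(L|K)} M^{\bullet} \hookleftarrow S^n_{C^{\bullet}(L|K)} A^{\bullet}(L|K). \]
By the preceding Lemma, the left-hand side equals $S^n_{C^{\bullet}}B'^{\bullet}$, where
\[ B'^{\bullet} = \bigl( V_K \otimes_K K[\Delta^{\bullet}]_0 \otimes_K C^{\bullet} \to W_K\otimes_K K[\Delta^{\bullet}]\otimes_K C^{\bullet} \bigr), \]
with $V$ in degree $0$, $W$ in degree $1$, and $C^{\bullet}=C^{\bullet}(L|K)$. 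So it suffices to find the chain already at the level of the two-term cosimplicial dg-modules $B'^{\bullet}$ and $A^{\bullet}(L|K)$, and then apply $S^n$ simplex-wise.

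Define $M^{\bullet}$ to have $V_K\otimes_K K[\Delta^{\bullet}]\otimes_K C^{\bullet}$ in degree $0$ and
\[ \bigl(W_K\otimes_K K[\Delta^{\bullet}]\otimes_K C^{\bullet}\bigr)\oplus \bigl(V_K\otimes_K C^{\bullet}\bigr) \]
in degree $1$. The differential is $(\mathrm{d},\mathrm{aug})$: on the $i$-th summand $\mathrm{d}$ is taken with respect to the $i$-th $L$-structure, exactly as in Definition~\ref{KEYDEF} and (\ref{eqB}), and $\mathrm{aug}$ is the augmentation $K[\Delta^{\bullet}]\to K$. The two maps into $M^{\bullet}$ are as follows.
\begin{enumerate}
\item $B'^{\bullet}\hookrightarrow M^{\bullet}$ is the obvious inclusion. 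Its cokernel is $V_K\otimes_K C^{\bullet}$ in degree $0$ mapping identically to $V_K\otimes_K C^{\bullet}$ in degree $1$, which is acyclic simplex-wise.
\item $A^{\bullet}(L|K)\hookrightarrow M^{\bullet}$ is the inclusion $W'_K\otimes K[\Delta^{\bullet}]\otimes C^{\bullet}\to V_K\otimes K[\Delta^{\bullet}]\otimes C^{\bullet}$ in degree $0$, via $\iota$ in the $i$-th structure and using $W'\otimes_{L,i}C^n\subset V\otimes_{L,i}C^n$, together with the identity on $V_K\otimes C^{\bullet}$ in degree $1$. Since $\mathrm{d}\circ\iota=0$, this is a chain map. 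Its cokernel is $(V/W')\otimes_{L,i}C^n\cong W\otimes_{L,i}C^n$ mapping isomorphically to $W\otimes_{L,i}C^n$, summand by summand. This is again acyclic simplex-wise, by surjectivity of $\mathrm{d}$ with kernel $W'$.
\end{enumerate}
Compatibility with cofaces and codegeneracies has to be checked. I expect it to be the same check as for $A^{\bullet}$ and $B^{\bullet}$, because all maps are built from canonical identifications and the augmentation.

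To pass to $S^n$, I would use that each of the two maps is split injective, degree-wise and as $C^n(L|K)$-modules. The split comes from choosing a $K$-linear splitting $V_K \to W'_K$ compatible after base change in each summand; if such a choice is not available, I would use instead that all modules involved are free over $C^n(L|K)$. Then the cokernel complexes are contractible complexes of free $C^n$-modules. In characteristic $0$, $S^n_{C^n}$ is a natural direct summand of $T^n_{C^n}$, and $T^n$ preserves homotopy equivalences of bounded complexes of free modules. Hence $S^n$ turns each inclusion into a quasi-isomorphism in every cosimplicial degree, i.e.\ a vertical quasi-isomorphism of the unnormalized double complexes. This gives the isomorphism of columns spectral sequences from $E^1$ on. For the invariants side one also has to check that the $\Gamma$-identification of the preceding Lemma is compatible with taking $S^n$ of $B'^{\bullet}$; this is immediate from the statement of that Lemma.

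I expect the main obstacle to be the bookkeeping of the $n+1$ different $L$-module structures on $C^n(L|K)$. The kernel of $\mathrm{d}$ on $V_K\otimes K[\Delta^n]\otimes C^n$ must be identified summand-wise with $W'\otimes_{L,i}C^n$, so that the inclusion in step 2 is really injective with the claimed cokernel, and this identification must be made compatibly with the codegeneracies, which merge two tensor factors and thus change the relevant $L$-structure. If a problem appears there, I would first verify everything in the Galois case via Lemma~\ref{LEMMAGALOIS}, where the $i$-th structure becomes the twist by $\sigma_1\cdots\sigma_i$ and all kernels are computed pointwise on $G^n$. After that I would reduce the general case using Proposition~\ref{PROPAMITSUR}.
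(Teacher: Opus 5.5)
Your proposal is correct and is essentially the paper's proof. The paper uses exactly your intermediate object $M^{\bullet}$ and the same two inclusions, notes that they are quasi-isomorphisms on every column, and then passes through $S^n$ because all terms are free over $C^n(L|K)$; the paper phrases this via flatness, while you use the degree-wise splitting together with the characteristic-$0$ retraction of $S^n$ off $T^n$, and either works. The only thing to tidy is your first suggested source of the splitting: no $K$-rational retraction $V_K\to W'_K$ is needed, and it need not exist. Freeness of the cokernels, or any $L$-linear retraction $V\to W'$ base-changed along the $i$-th structure, already splits each simplex degree-wise.
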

\begin{proof}
The chain is induced by the following maps
\begin{gather*} { \footnotesize
\left( \vcenter{ \xymatrix{  V_K \otimes_K K[\Delta^{\bullet}]_0 \otimes_K C^{\bullet}(L|K) \ar[d] \\
W_K \otimes_K K[\Delta^{\bullet}] \otimes_K C^{\bullet}(L|K)
}}\right) \rightarrow
\left( \vcenter{ \xymatrix{  V_K \otimes_K K[\Delta^{\bullet}] \otimes_K C^{\bullet}(L|K)  \ar[d] \\
(W_K \otimes_K K[\Delta^{\bullet}] \otimes_K C^{\bullet}(L|K)) \oplus (V_K \otimes_K C^{\bullet}(L|K))
} } \right) } \\
\leftarrow
{ \footnotesize \left( \vcenter{ \xymatrix{  W'_K \otimes_K K[\Delta^{\bullet}] \otimes_K C^{\bullet}(L|K)  \ar[d] \\
V_K \otimes_K C^{\bullet}(L|K)
} } \right)  }
\end{gather*}
which  obviously induce quasi-isomorphisms of the columns. 
Hence they induce  quasi-isomorphisms of the columns after applying $S^{n}_{C^{\bullet}(L|K)}$ \cite[Tag 064K]{SP} because they consist of free (hence flat) $C^{n}(L|K)$-modules. Hence they induce a filtered quasi-isomorphism after applying  $\Tot(S^{\bullet}_{C^{\bullet}(L|K)} - )$, and in fact, isomorphisms of the whole column spectral sequences from page $E^1$ onwards. 
\end{proof}

\section{Proof of strictness}

Recall Definition~\ref{KEYDEF}. Applying $S^{2n}$ to its functoriality in $K$ induces the following projection with kernel $\ker^{2n}$, i.e.\ induces a short exact sequence of cosimplicial dg-modules of the form:
\begin{equation}\label{eqex} \xymatrix{ 0 \ar[r] &  \ker^{2n}  \ar[r] &  S^{2n}_{C^{\bullet}(L|K)} A^{\bullet}(L|K)  \ar[r] &  S^{2n}_{L} A^{\bullet}(L)  \ar[r] &  0. }
\end{equation}
\comment{
Inserting the definition: 
\[
0 \to \ker^n \to 
S^n_{C^{\bullet}(L|K)} \left( \vcenter{ \xymatrix{  W'_K \otimes_K K[\Delta^{\bullet}] \otimes_K C^{\bullet}(L|K)  \ar[d] \\
V_K \otimes_K C^{\bullet}(L|K)
} } \right)  \to 
S^n_{L} \left( \vcenter{ \xymatrix{  W'_L  \otimes_L L[\Delta^{\bullet}]   \ar[d] \\
V_L
} } \right) \to 0
\]
}
This section is dedicated to the question under which conditions the projection map induces a strict morphism of filtered vector spaces on the total cohomology of the double complex of unnormalized cochains (always w.r.t.\@ the column filtration).
For $L = \C$ and $K = \Q$ the strictness for the $n$-th filtration step  implies the surjectivity of 
\[ \mathrm{dlog}: H^{n}(X, \mathcal{K}_n^{M,\mathrm{an}})_{\Q}  \to H^{n,n} \cap H^{2n}(X, \Q)   \]
(for a complex torus $X$ described by the Hodge structure used to define $A^{\bullet}(\C|\Q)$) 
by Propositions~\ref{PROPL}, \ref{PROPK}, and \ref{PROPCOMP} (cf.\@ also Remark~\ref{REMCOMP}) because $\mathcal{K}_n^{M,\mathrm{an}}$ is a quotient of
$\Lambda^n_{\Z} \OOO_X^*$. To begin with, one can w.l.o.g.\@ take $L$ to be a field of definition for $\mathrm{d}: V \to W$:

\begin{PROP}\label{PROPFOD}
For fields $L|L'|K$, we have isomorphisms of filtered vector spaces
\begin{equation}\label{eqprop1}
 H^i \left(\Tot(S^j_{C^{\bullet}(L|K)} A^{\bullet}(L|K)) \right) \cong H^i \left(\Tot(S^j _{C^{\bullet}(L'|K)} A^{\bullet}(L'|K)) \right)
 \end{equation}
if $\mathrm{d}$ is defined over $L'$ and
\begin{equation}\label{eqprop2}
 H^i \left( \Tot(S^j_{C^{\bullet}(L|L')} A^{\bullet}(L|L')) \right) \cong H^i \left( \Tot(S^j_{C^{\bullet}(L|K)} A^{\bullet}(L|K)) \right) \otimes_K L'   
 \end{equation}
if $\mathrm{d}$ is defined over $K$.
\end{PROP}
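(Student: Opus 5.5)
For (\ref{eqprop1}) I will use Proposition~\ref{PROPAMITSUR} columnwise. Since $\mathrm{d}$ is defined over $L'$, we have $W' = W'_{L'} \otimes_{L'} L$ with $W'_{L'} \subseteq V_K \otimes_K L'$. The construction of Definition~\ref{KEYDEF} over $L'$ makes sense, and the first task is to check that
\[ A^{\bullet}(L|K) \cong A^{\bullet}(L'|K) \otimes_{C^{\bullet}(L'|K)} C^{\bullet}(L|K) \]
as cosimplicial dg-modules over $C^{\bullet}(L|K)$. This holds because the differential on the $i$-th summand is $\iota \otimes_{L,i} C^n$, and $\iota$ is the base change of the $L'$-rational inclusion. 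Symmetric powers commute with base change, so
\[ S^j_{C^{\bullet}(L|K)} A^{\bullet}(L|K) \cong \bigl(S^j_{C^{\bullet}(L'|K)} A^{\bullet}(L'|K)\bigr) \otimes_{C^{\bullet}(L'|K)} C^{\bullet}(L|K). \]
For each fixed vertical degree $k$, the $k$-th row is a cosimplicial $C^{\bullet}(L'|K)$-module $X^{\bullet}_k$. Proposition~\ref{PROPAMITSUR} then gives a homotopy equivalence between the rows, with maps $\iota$ and $\id \otimes \chi$. Both maps commute with the vertical differential: $\iota$ trivially, and $\id\otimes\chi$ because $\chi$ is pointwise $C^n(L'|K)$-linear, hence linear for every $L'$-module structure, while the vertical differential is $C^n$-linear. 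So $\iota$ is a morphism of double complexes that is a quasi-isomorphism on each row. One point needs care: the columns filtration on the total complex is not the filtration for which rows are the graded pieces. The fix is to note that $\iota$ and $\kappa$ are defined in every bidegree, and $\kappa$ lowers the column index by one. This gives a homotopy on total complexes. Alternatively, the row spectral sequence together with the comparison maps shows that $\iota$ is an isomorphism on total cohomology, and because $\id \otimes \chi$ is also a map of double complexes, hence filtered, the isomorphism has a filtered inverse on cohomology. That makes it strict, and (\ref{eqprop1}) follows.

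For (\ref{eqprop2}), where $\mathrm{d}$ is defined over $K$, I will first apply (\ref{eqprop1}) with $L' = K$ to reduce to $L = K$ on the right, and with $L'$ in place of $K$ on the left to reduce to $A^{\bullet}(L'|L')$. It then remains to show
\[ H^i(\Tot S^j_{L'} A^{\bullet}(L')) \cong H^i(\Tot S^j_K A^{\bullet}(K)) \otimes_K L'. \]
Here $W'_K$ can be taken to be the $K$-form of $W'$ given by $\ker$ of $\mathrm{d}$ over $K$; this choice does not affect cohomology by the quasi-isomorphism lemmas. With that choice, $A^{\bullet}(L') = A^{\bullet}(K) \otimes_K L'$ termwise and compatibly with all structure maps. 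Since $-\otimes_K L'$ is exact, it commutes with taking cohomology of the total complex and of the column filtration steps, which gives a filtered isomorphism. For the reduction steps, compatibility requires that the isomorphism of (\ref{eqprop1}) be natural with respect to extending the base field $K$; I will verify this explicitly.

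The main obstacle is the filtered part of (\ref{eqprop1}): checking that the Amitsur homotopy data respects both the cosimplicial direction and the vertical differential, so that the resulting isomorphism on total cohomology is strict for the column filtration. A secondary point is verifying the base-change identification of $A^{\bullet}$, including $\Delta$-indices and the $i$-th module structures.
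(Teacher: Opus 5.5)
Your proposal is correct and follows the paper's proof. For (\ref{eqprop1}) you use the base-change identification $A^{\bullet}(L|K)\cong A^{\bullet}(L'|K)\otimes_{C^{\bullet}(L'|K)}C^{\bullet}(L|K)$ together with Proposition~\ref{PROPAMITSUR}, applied compatibly with the vertical differentials; for (\ref{eqprop2}) you reduce via (\ref{eqprop1}) to $A^{\bullet}(L')\cong A^{\bullet}(K)\otimes_K L'$. Your strictness argument spells out what the paper leaves to ``obviously'': $\iota$ and $\id\otimes\chi$ are both maps of double complexes, hence filtered, and they are mutually inverse on total cohomology.
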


\begin{proof}
The existence of the isomorphism (\ref{eqprop1}) follows from Proposition~\ref{PROPAMITSUR} because we have $A^{\bullet}_{L|K} \cong A^{\bullet}_{L'|K} \otimes_{C^{\bullet}(L'|K)} C^{\bullet}(L|K)$ if $\mathrm{d}$ is defined over $L'$
and the homotopies constructed in Proposition~\ref{PROPAMITSUR} are obviously compatible with the differentials in the $A^{\bullet}$.
For isomorphism (\ref{eqprop2}) we may assume $L=L'$, and $L=K$, respectively, by (\ref{eqprop1}). Under these assumptions, we have
\[ A^{\bullet}(L') \cong A^{\bullet}(K) \otimes_K L' \]
and the statement is obvious. 
\end{proof}

By (\ref{eqprop2}) the map 
\begin{equation} \label{eqfunct} H^{2n}(\Tot(S^{2n}_{C^{\bullet}(L|K)} A^{\bullet}(L|K))) \to H^{2n}(\Tot(S^{2n}_L A^{\bullet}(L)))  \end{equation}
induced by the projection map in (\ref{eqex}), is automatically strict if $\mathrm{d}$ is defined over $K$. 
\begin{FRAGE}
Is the map (\ref{eqfunct}) always strict?
\end{FRAGE}

\begin{PAR}\label{PAREINFTY}
From the commutative diagram with exact rows (where the terms $E^{\infty}$ are the abutments of the column spectral sequences)
\[ { \footnotesize \xymatrix@=1em{
0 \ar[r] & F^{i+1} H^{2n}(\Tot(S^{2n} A^{\bullet}(L|K))) \ar[r] \ar[d] & F^{i} H^{2n} (\Tot(S^{2n} A^{\bullet}(L|K))) \ar[r] \ar[d] & E^{\infty}_{i,2n-i}(S^{2n} A^{\bullet}(L|K)) \ar[d] \ar[r] & 0 \\
0 \ar[r] & F^{i+1} H^{2n}(\Tot(S^{2n} A^{\bullet}(L))) \ar[r] & F^{i} H^{2n}(\Tot(S^{2n} A^{\bullet}(L))) \ar[r] & E^{\infty}_{i,2n-i}(S^{2n} A^{\bullet}(L)) 
 \ar[r] & 0
 } }\]
follows by induction that the strictness of the map (\ref{eqfunct}) for the $n$-th filtration step follows from the injectivity of the induced map
\[ E^{\infty}_{i,2n-i}(S^{2n}_{C^{\bullet}(L|K)} A^{\bullet}(L|K)) \to E^{\infty}_{i,2n-i}(S^{2n}_L A^{\bullet}(L))  \]
 for all $0 \le i<n$. 
 \end{PAR}
So far, we will be able to show only (cf.\@ page~\pageref{KEYPROPPROOF} for the proof): 
\begin{PROP}\label{KEYPROP}
If $L|K$ is Galois then the map\footnote{the evenness of the dimension plays no role, hence we replace $2n$ by $n$} 
\[ E^{\infty}_{i,n-i}(S^n_{C^{\bullet}(L|K)} A^{\bullet}(L|K)) \to E^{\infty}_{i,n-i}(S^n_L A^{\bullet}(L))  \]
induced by the projection map in (\ref{eqex}) is injective for $i=0,1$.
\end{PROP}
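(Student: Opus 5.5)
For $i=0$ I would argue directly. For $i=1$ I would use the Galois description of Amitsur's ring to turn $E_1$-terms into group cochains, and then prove injectivity on $E^\infty$ through an explicit lifting argument on the $E_1$-page.

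\textbf{The case $i=0$.} In the zeroth column $\Delta^0$ and $C^0(L|K)=L$ are trivial. Hence the projection in (\ref{eqex}) is an isomorphism of the zeroth columns: both are $S^n_L$ of $W'\to V$.

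The column spectral sequence lives in the first quadrant. Therefore $E^\infty_{0,n}$ is the subobject of $E^1_{0,n}$ cut out by the kernels of all the differentials $d_r$ leaving column $0$. No differential enters column $0$.

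The $d_r$ are compatible with the morphism of spectral sequences. So an element of $E^\infty_{0,n}$ over $K$ maps to its image inside $E^1_{0,n}$ over $L$. Since the column-$0$ map is an isomorphism, this gives injectivity.

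\textbf{The case $i=1$: first reductions.} Here $E^\infty_{1,n-1}$ is a subquotient of $E^1_{1,n-1}$: we take the kernel of the $d_r$ leaving column $1$ and divide by the image of $d_1$ from column $0$. Only $d_1$ enters column $1$.

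On the $L$-side, Lemma~\ref{LEMMACOHA} (via Künneth) computes the $E_1$-page. The spectral sequence degenerates at $E_1$ or $E_2$, and $E^\infty_{1,n-1}=\gr^1_F\Lambda^n_L V\cong W'\otimes\Lambda^{n-1}W$.

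On the $K$-side I would describe the first two columns explicitly using Lemma~\ref{LEMMAGALOIS} and Proposition~\ref{PROPGALOIS}.
\begin{itemize}
\item Column $0$ is $S^n_L(W'\to V)$.
\item Column $1$ is $\Hom(G,-)$ applied to $S^n$ of the complex $W'\oplus W'\to V$ with map $(a_0,a_1)\mapsto \iota a_0+\sigma\iota\sigma^{-1}a_1$.
\end{itemize}
So the $K$-side is a twisted version of the $L$-side. The projection to the $L$-side is evaluation at $\sigma=1$, after which the two copies of $W'$ are identified.

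The vertical cohomology of column $1$ at $\sigma$ is the cohomology of $S^n$ of the two-term complex $W'\oplus W'\to V$. This complex has kernel $\{(a,-\sigma^{-1}\ldots)\}\cong W'\cap\sigma W'$-type terms and cokernel $V/(W'+{}^\sigma W')$. I would compute $E^1_{1,n-1}$ as functions $\sigma\mapsto$ (this cohomology), via Künneth pointwise in $\sigma$, using the splitting by the delta function at $1$ noted after Proposition~\ref{PROPGALOIS}.

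\textbf{The case $i=1$: the lifting argument.} Suppose a class $x\in E^\infty_{1,n-1}$ over $K$ maps to zero. Its representative $x(\sigma)$ at $\sigma=1$ is then a $d_1$-image in $W'\otimes\Lambda^{n-1}W$ on the $L$-side, i.e. it lies in $d_1$ of column $0$ (the $L$-side has trivial $E_1$ differential in the relevant place except for that term).

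Since column $0$ agrees on both sides, I would lift this preimage to the $K$-side and subtract its $d_1$. That reduces to the case $x(1)=0$ in $E^1$.

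The remaining task is to show that a $d_1$-cocycle (and $d_r$-cycle) with $x(1)=0$ is a $d_1$-boundary. The natural approach is to use the simplicial identities: the cocycle condition in column $2$ relates $x(\sigma\tau)$, ${}^\sigma x(\tau)$ and $x(\sigma)$, and combining it with $x(1)=0$ should constrain $x$.

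This last step is the main obstacle. I have not checked that the cocycle conditions really do force such an $x$ to vanish modulo $d_1$-boundaries, and I do not expect it to be automatic. I would first try to exploit linear independence of characters, i.e. Galois descent in the form $(L\otimes_K L)\cong\Hom(G,L)$. The aim is to show that the pointwise cohomology at $\sigma\ne1$ contributes nothing that survives to $E^\infty$. The higher $d_r$, which vanish on the $L$-side, are where this could break down.
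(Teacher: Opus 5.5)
\textbf{Case $i=0$.} Your argument is correct and matches the paper's: the projection is an isomorphism on column $0$, and $E^\infty_{0,n}\subseteq E^1_{0,n}$ because no differential enters column $0$.

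\textbf{Case $i=1$: the gap.} Your proposal stops exactly where the real content lies, and you say so yourself.

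Your reduction to $x(1)=0$ is sound once you note, as in Lemma~\ref{LEMMASPLIT}, that the delta function at $(1,\dots,1)$ is $L$-linear for every $L$-structure on $C^j(L|K)$. It therefore splits $0\to\widetilde{\ker}^n\to T^n_{C^\bullet(L|K)}A^\bullet(L|K)\to T^n_LA^\bullet(L)\to 0$ column-wise, compatibly with the vertical differential. Hence $E^1$ is exact and $E^1(\widetilde{\ker}^n)$ injects into the middle term. What you still need is exactly $E^2_{1,n-1}(\widetilde{\ker}^n)=0$, i.e.\ injectivity of $d^1\colon E^1_{1,j}(\widetilde{\ker}^n)\to E^1_{2,j}(\widetilde{\ker}^n)$ for $j<n$.

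Your worry about higher $d_r$ is unfounded. Only $d^1$ enters column $1$, so $E^\infty_{1,n-1}\subseteq E^2_{1,n-1}$ on both sides, and injectivity on $E^2$ suffices. Cocycle identities plus linear independence of characters do not by themselves give the needed vanishing. The pointwise vertical cohomology at $\sigma\neq 1$ can be nonzero, e.g.\ when $W'$ is defined over $K$. What kills it is horizontal contractibility.

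\textbf{The missing idea.} It is the role of the factor $K[\Delta^\bullet]$ in $A_0$.
\begin{enumerate}
\item Pass from $S^n$ to $T^n$ (via $PS=\mathrm{id}$), so that everything decomposes into components $A_{j_1}\otimes\cdots\otimes A_{j_n}$.
\item For $j<n$, each component has a position $l$ with $j_l=0$. Lemma~\ref{LEMMACONTRACTDELTA} gives a horizontal contracting homotopy $\kappa_l$ of that component. It commutes with $\mathrm{d}_A^{(l')}$ for all $l'\neq l$, because $\mathrm{d}_A$ commutes with codegeneracies, and it preserves $\widetilde{\ker}^n$.
\item Suppose $d^1[y]=0$, so $\mathrm{d}y=\mathrm{d}_A z$ in column $2$. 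Since $\widetilde{\ker}^n$ vanishes in column $0$, you get
$y_{j_1,\dots,j_n}=\kappa_l(\mathrm{d}y)_{j_1,\dots,j_n}=\sum_{l'\colon j_{l'}=1}\pm\mathrm{d}_A^{(l')}\kappa_l z^{(l')}$,
where $z^{(l')}$ is the component of $z$ with the $l'$-th index lowered to $0$. So each component is separately a vertical boundary, with $l$ depending on the component.
\item The K\"unneth-type gluing Lemma~\ref{LEMMA1} applies because $C^1(L|K)\cong\prod_{G}L$ is a finite product of fields. It yields a single $\widetilde z$ with $\mathrm{d}_A\widetilde z=y$.
\item Hence $[y]=0$ in $E^1$ of the middle term, and so $[y]=0$ in $E^1(\widetilde{\ker}^n)$ by the split injectivity.
\end{enumerate}
Without this argument or a substitute, the case $i=1$ is not established.
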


Proposition~\ref{KEYPROP} is sufficient to prove the holomorphic $(2,2)$-theorem {\em for complex tori with algebraic (normalized) period matrix}, in particular for Abelian varieties of CM-type, i.e.\@ Theorem~\ref{SATZ22}:
\begin{proof}[Proof of Theorem~\ref{SATZ22}.] \label{SATZ22PROOF}
By Propositions~\ref{PROPL}, \ref{PROPK}, and \ref{PROPCOMP} (cf.\@ also Remark~\ref{REMCOMP}) the surjectivity of
\[ \mathrm{dlog}: H^{2}(X, \mathcal{K}_2^{M,\mathrm{an}})_{\Q}  \to H^{2,2} \cap H^{4}(X, \Q)   \]
follows from the strictness for the second filtration step of the map (\ref{eqfunct}) for $L= \C$ and $K=\Q$ because $\mathcal{K}_2^{M,\mathrm{an}}$ is a quotient of
$\Lambda^2_{\Z} \OOO_X^*$. 
By Proposition~\ref{PROPFOD} we are reduced to prove the strictness of (\ref{eqfunct}) for the second filtration step for $L$ being a field of definition of $\mathrm{d}: V = H^1(X, \C) \to W = H^1(X, \OOO_X)$ (w.r.t.\@ the fixed $\Q$-structure $V_{\Q} = H^1(X, \Q)$ on $V$). 
The algebraicity of the normalized period matrix implies that $\mathrm{d}$ is defined over a finite extension of $\Q$ which w.l.o.g.\@ may assumed to be Galois. 
The discussion in \ref{PAREINFTY} shows that Proposition~\ref{KEYPROP} implies the strictness of  (\ref{eqfunct}) in this case. 
\end{proof}

\begin{PAR}\label{PARKEYPROPT}
The symmetrization map $S$ (in the graded sense) and the projection $P$ induce morphisms of exact sequences of cosimplicial dg-modules
\begin{equation*} 
\vcenter{ \xymatrix{
0 \ar[r] &  \ker^n \ar[r] \ar@<2pt>[d]^S &  S^n_{C^{\bullet}(L|K)} A^{\bullet}(L|K) \ar[r] \ar@<2pt>[d]^S  &  S^n_{L} A^{\bullet}(L) \ar[r] \ar@<2pt>[d]^S  &  0  \\
0 \ar[r] &  \widetilde{\ker}^n \ar[r] \ar@<2pt>[u]^P  &  T^n_{C^{\bullet}(L|K)} A^{\bullet}(L|K) \ar[r] \ar@<2pt>[u]^P  &  T^n_{L} A^{\bullet}(L) \ar[r] \ar@<2pt>[u]^P &  0  \\
}}
\end{equation*}
such that $PS = \id$. This makes the sequence of spectral sequences for $S^n$ a direct summand of the one for $T^n$ and it suffices to see the statement of Proposition~\ref{KEYPROP} for $T^n$.
\end{PAR}

\begin{LEMMA}\label{LEMMASPLIT}
If $L|K$ is Galois, 
the exact sequence
\begin{equation}\label{eqex2} 
\vcenter{ \xymatrix{
0 \ar[r] &  \widetilde{\ker}^n \ar[r]  &  T^n_{C^{\bullet}(L|K)} A^{\bullet}(L|K) \ar[r]   &  T^n_{L} A^{\bullet}(L) \ar[r]  &  0  \\
}}
\end{equation}
 induces 
\begin{enumerate}
\item \dots {\em split} exact sequences (for the column spectral sequences)
\[ \vcenter{ \xymatrix{
 0 \ar[r] & E^0(\widetilde{\ker}^n) \ar[r] & E^0(T^n_{C^{\bullet}(L|K)} A^{\bullet}(L|K)) \ar[r] & E^0( T^n_L A^{\bullet}(L)) \ar[r] &  0, 
 }} \]
(that is, sequence and splitting compatible with the differentials $\mathrm{d}^0$ --- which are just the vertical differentials in the double complexes),
\item \dots exact sequences
\begin{equation*} \xymatrix{
 0 \ar[r] & E^1(\widetilde{\ker}^n) \ar[r] & E^1(T^n_{C^{\bullet}(L|K)} A^{\bullet}(L|K)) \ar[r] & E^1(T^n_L A^{\bullet}(L)) \ar[r] & 0, 
 } \end{equation*}
\item \dots and exact sequences
\begin{equation} \label{eqe2}   \xymatrix{
E^2(\widetilde{\ker}^n) \ar[r] & E^2(T^n_{C^{\bullet}(L|K)} A^{\bullet}(L|K)) \ar[r] & E^2(T^n_L A^{\bullet}(L)), 
}  \end{equation}
\end{enumerate}
but not necessarily further. 
\end{LEMMA}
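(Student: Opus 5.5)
The plan is to work column by column. The $E^0$-terms of the column spectral sequence are the columns themselves, i.e.\ the cosimplicial degree $m$ pieces $T^n_{C^m(L|K)} A^m(L|K)$ and $T^n_L A^m(L)$ together with their vertical differentials. So item~1 reduces to a statement about dg-modules over $C^m(L|K)$ for each fixed $m$: we must show that the projection is a surjection of vertical complexes with a $\mathrm{d}^0$-compatible splitting. No compatibility with cofaces is required, because $E^0$ only remembers $\mathrm{d}^0$.

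The key steps, in order, are as follows.

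\textbf{Step 1 (Galois description of a column).} Use Proposition~\ref{PROPGALOIS} and Lemma~\ref{LEMMAGALOIS} to identify the $m$-th column of $A^{\bullet}(L|K)$ with the two-term complex
\[ \Hom(G^m, W')^{m+1} \to \Hom(G^m,V), \]
with differential $(a_0,\dots,a_m)\mapsto \sum_i {}^{\sigma_1\cdots\sigma_i}\iota\, a_i$. This is a product over $g=(\sigma_1,\dots,\sigma_m)\in G^m$ of $L$-linear complexes
\[ B_g := \Big( (W')^{m+1} \xrightarrow{(\iota,{}^{\tau_1}\iota,\dots,{}^{\tau_m}\iota)} V \Big), \qquad \tau_i=\sigma_1\cdots\sigma_i . \]
Since $C^m(L|K)\cong \Hom(G^m,L)=\prod_g L$ is a product of fields, the tensor power $T^n_{C^m}$ becomes the product over $g$ of $T^n_L B_g$.

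\textbf{Step 2 (identify the projection).} The projection to $A^m(L)$ is, by the remark after Proposition~\ref{PROPGALOIS}, evaluation at $g=(1,\dots,1)$. At $g=1$ all the maps ${}^{\tau_i}\iota$ equal $\iota$, so $B_1 \to A^m(L)=\big((W')^{m+1}\to V\big)$ is the identity: $L[\Delta^m]\otimes W'$ is $(W')^{m+1}$ with the summation map. The map on $E^0$ is therefore the projection of $\prod_g T^n_L B_g$ onto the factor $g=1$. This is a split surjection of complexes, split by the delta function at $(1,\dots,1)$ (the splitting noted after Proposition~\ref{PROPGALOIS}). The splitting is compatible with $\mathrm{d}^0$ because $\mathrm{d}^0$ acts factorwise. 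Its kernel is $\prod_{g\neq 1}T^n_L B_g$, which gives item~1. Before relying on this, I will check that $T^n_{C^m}$ of the product module really splits as a product of the $T^n_L B_g$; this holds because idempotents of $C^m$ decompose every $C^m$-module.

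\textbf{Step 3 (the $E^1$ and $E^2$ pages).} Item~2 follows formally: a split short exact sequence of complexes stays split exact on cohomology, and $E^1$ is the vertical cohomology. For item~3, the sequence $0\to E^1(\widetilde\ker)\to E^1(T^n A(L|K))\to E^1(T^n A(L))\to 0$ is a short exact sequence of complexes with respect to $\mathrm{d}^1$, the horizontal differential induced by the cofaces. Taking cohomology gives a long exact sequence whose middle portion is the exactness of (\ref{eqe2}) at the middle term. The $E^2$-level maps need not be injective or surjective because connecting maps appear, and these are the obstruction to going further.

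The main obstacle is Step~2: verifying carefully that the isomorphism of Lemma~\ref{LEMMAGALOIS} intertwines the projection $A^{\bullet}(L|K)\to A^{\bullet}(L)$ with evaluation at the identity tuple, and that $T^n$ over the product ring commutes with the product decomposition. The splitting on $E^1$ is not compatible with $\mathrm{d}^1$, since the delta function is not cosimplicial, and this is exactly why exactness fails beyond $E^2$.
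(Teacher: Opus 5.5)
Your proposal is correct and follows the paper's proof. Both arguments use Lemma~\ref{LEMMAGALOIS} to identify the column-wise projection with evaluation at $(1,\dots,1)$, split it by the delta function there, and then obtain items~2 and~3 formally from the $\mathrm{d}^0$-compatible splitting and the long exact sequence for $\mathrm{d}^1$. The only difference is packaging: the paper extends the $n=1$ section $s$ to $T^n$ as $s\otimes\cdots\otimes s$ and checks that this respects the $\otimes_L$-relations, whereas you decompose $C^m(L|K)\cong\prod_{g\in G^m}L$ by its idempotents, so that the column becomes $\prod_g T^n_L B_g$ and the projection is the factor at $g=1$. This yields the same splitting.
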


\begin{proof}
1.\@ In the Galois case, by Lemma~\ref{LEMMAGALOIS}, the projection in the exact sequence (\ref{eqex2}) restricted to the columns and for $n=1$ looks like 
\[ \xymatrix{
 \Hom(G^j, W')^{j+1}  \ar[d]_{\iota, {}^{\sigma_1} \iota, \dots, {}^{\sigma_1 \cdots \sigma_j} \iota} \ar[r] & (W')^{j+1} \ar[d]^{\iota, \dots, \iota} \\ 
 \Hom(G^j, V) \ar[r] & V
}\] 
where the horizontal maps are evaluation at $(1, \dots, 1)$. These have obviously sections $s$, given by the delta function at $(1, \dots, 1)$, which are compatible with the differential.
The required splitting is then given by $x_0 \otimes_L \cdots \otimes_L x_n \mapsto s(x_0) \otimes_{C^{j}(L|K) } \cdots  \otimes_{C^{j}(L|K) }  s(x_n)$.
This respects the relations of $\otimes_L$ because the section is $L$-linear w.r.t.\@ any of the $j+1$ $L$-module structures on $C^{j}(L|K)$.
2.\@ and 3.\@ follow immediately from 1.
\end{proof}

\begin{KEYLEMMA}\label{KEYLEMMA}
The morphism
\[ \mathrm{d}^1: E^1_{1, j}(\widetilde{\ker}^n) \to E^1_{2,j}(\widetilde{\ker}^n)  \]
(for the column spectral sequence) is injective if $j  < n$.
\end{KEYLEMMA}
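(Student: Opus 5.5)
The plan is to make everything explicit using the Galois description of Lemma~\ref{LEMMAGALOIS}, and then to test $\mathrm{d}^1$ on the components indexed by pairs $(1,\sigma)\in G^2$.

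\emph{Step 1: column $p$ as a family of complexes over $G^p$.} Put
\[ C_{\underline\sigma} := \Bigl( (W')^{p+1} \xrightarrow{\ \iota,\ {}^{\sigma_1}\iota,\ \dots,\ {}^{\sigma_1\cdots\sigma_p}\iota\ } V \Bigr), \qquad \underline\sigma=(\sigma_1,\dots,\sigma_p)\in G^p. \]
By Lemma~\ref{LEMMAGALOIS} the $p$-th column of $A^{\bullet}(L|K)$ is $\Hom(G^p,-)$ applied pointwise to $C_{\underline\sigma}$. The tensor product over $C^p(L|K)\cong\Hom(G^p,L)$ is the pointwise tensor product over $L$. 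Hence the $p$-th column of $T^n_{C^\bullet(L|K)}A^\bullet(L|K)$ is $\underline\sigma\mapsto C_{\underline\sigma}^{\otimes n}$.

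As in the proof of Lemma~\ref{LEMMASPLIT}, the projection to $T^n_L A^\bullet(L)$ is evaluation at $(1,\dots,1)$. The splitting is the delta function there. Therefore the $p$-th column of $\widetilde{\ker}^n$ is $\bigoplus_{\underline\sigma\neq(1,\dots,1)} C_{\underline\sigma}^{\otimes n}$.

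By Künneth over the field $L$,
\[ E^1_{p,j}(\widetilde{\ker}^n)=\bigoplus_{\underline\sigma\neq 1}\ \bigoplus_{\epsilon}\ H^{\epsilon_1}(C_{\underline\sigma})\otimes\cdots\otimes H^{\epsilon_n}(C_{\underline\sigma}). \]
Here $\epsilon$ runs over $\{0,1\}^n$ with $\sum\epsilon_k=j$, $H^1(C_{\underline\sigma})=V/\sum_i {}^{\sigma_1\cdots\sigma_i}W'$, and $H^0(C_{\underline\sigma})$ is the kernel of the sum map. Since $j<n$, every summand has at least one factor $H^0$. This is exactly where the hypothesis $j<n$ enters.

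\emph{Step 2: $\mathrm{d}^1$ on the components $(1,\sigma)$.} I compute $\mathrm{d}^1=\delta_0-\delta_1+\delta_2$ from column $1$ to column $2$. I use the coface formulas of the group-cohomology model, combined with the cofaces of $K[\Delta^\bullet]$ on the $W'$-slots.

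Take a class $x=(x_\sigma)_{\sigma\neq1}$ and fix $\sigma\neq1$. At the index $(1,\sigma)$:
\begin{itemize}
\item $\delta_2x$ involves $x_1$, which is $0$ in $\widetilde{\ker}^n$.
\item $\delta_0x$ and $\delta_1x$ both involve $x_\sigma$, transported into $C_{(1,\sigma)}$, whose $W'$-part is $(W')^3$ with maps $\iota,\iota,{}^\sigma\iota$.
\end{itemize}
The two transports agree on the $V$-parts, since $V$ enters through $V_K\otimes_K C^\bullet$. They differ on the $W'$-slots: $\delta_0$ and $\delta_1$ insert the zero slot in different positions. On an $H^0$-factor $(w_0,w_1)\in\ker(W'^2\to V)$ the difference is $(w_0,-w_0,0)$ up to sign. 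This is a nonzero element of $H^0(C_{(1,\sigma)})$ unless $w_0=0$. If $w_0=0$, then ${}^\sigma\iota(w_1)=0$, so $w_1=0$ because ${}^\sigma\iota$ is injective. On $H^1$-factors both transports induce the natural map $V/(W'+{}^\sigma W')\to V/(W'+W'+{}^\sigma W')$, which is the identity.

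\emph{Step 3: assembling the tensor factors.} The map $(\delta_0-\delta_1)$ on a tensor product of factors is not simply a derivation of the one-factor differences. So I would write it as $\delta_0^{\otimes n}-\delta_1^{\otimes n}$ and decompose $H^0(C_{(1,\sigma)})$ as a direct sum: the image of $\delta_0$ on $H^0(C_\sigma)$ plus the complementary line of antidiagonal vectors $(w,-w,0)$. In this decomposition, the component of $(\delta_0^{\otimes n}-\delta_1^{\otimes n})(x_\sigma)$ with exactly one antidiagonal factor should be injective in $x_\sigma$, using the computation of Step 2 factorwise together with Künneth.

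Finally, different $\sigma$ land in different indices $(1,\sigma)$. Hence $\mathrm{d}^1x=0$ forces $x_\sigma=0$ for all $\sigma\neq1$, which gives the injectivity claimed in Key Lemma~\ref{KEYLEMMA}.

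\emph{Main obstacle.} I expect the hardest step to be Step 3. One must show that the antidiagonal component of $\delta_0^{\otimes n}-\delta_1^{\otimes n}$ is not cancelled by other terms in the $n$-fold tensor product. I would first try to choose the splitting of $H^0(C_{(1,\sigma)})$ so that $\delta_0$ and $\delta_1$ have the same image modulo the antidiagonal line. Then the mixed terms would be visibly of higher "antidiagonal degree" and could be discarded.
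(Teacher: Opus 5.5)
Your argument is correct, and it takes a genuinely different route from the paper's. The step you flag as the main obstacle closes immediately with the splitting you propose. Put $u(w)=(w,-w,0)$; note that $u(W')$ is a copy of $W'$, not a line. Then $C_{(1,\sigma)}=\delta_0(C_\sigma)\oplus u(W')$ already as complexes, with $u(W')$ sitting in degree $0$. Moreover $\delta_1=\delta_0+u\circ\mathrm{pr}_0$, where $\mathrm{pr}_0(w_0,w_1)=w_0$.

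Hence $\delta_1^{\otimes n}-\delta_0^{\otimes n}$ is a sum over nonempty sets $S$ of slots carrying $u\,\mathrm{pr}_0$, and the terms land in pairwise distinct direct summands. So it suffices to look at the $|S|=1$ part. That part maps $H^{\epsilon_1}\otimes\cdots\otimes H^{\epsilon_n}$ injectively into the summands indexed by the pairs $(k,\epsilon)$ with $\epsilon_k=0$. Injectivity of $\mathrm{pr}_0$ on $H^0(C_\sigma)$ is exactly your observation that ${}^\sigma\iota$ is injective. Distinct pairs $(k,\epsilon)$ give distinct summands, and $j<n$ guarantees that such a $k$ exists for every $\epsilon$.

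The paper never computes $E^1(\widetilde{\ker}^n)$. Instead it uses the contracting homotopies $\kappa_l$ coming from the $K[\Delta^\bullet]$-factor in a slot with $j_l=0$; these commute with $\mathrm{d}_A$ in the other slots and preserve $\widetilde{\ker}^n$. With them it shows that a $\mathrm{d}^1$-cocycle $y$ is, component by component, a $\mathrm{d}_A$-image of elements of column $1$ of the full $T^n_{C^\bullet(L|K)}A^\bullet(L|K)$. Lemma~\ref{LEMMA1} glues these componentwise preimages into a single one; this needs $C^1(L|K)$ to be a finite product of fields. Finally, the injectivity of $E^1(\widetilde{\ker}^n)\to E^1(T^n_{C^\bullet(L|K)}A^\bullet(L|K))$ from Lemma~\ref{LEMMASPLIT} finishes the proof.

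Your version uses the Galois model of Lemma~\ref{LEMMAGALOIS} in every column. In exchange it needs neither Lemma~\ref{LEMMA1}, nor the homotopies $\kappa_l$, nor that injectivity, only the description of $\widetilde{\ker}^n$ as the part supported away from $(1,\dots,1)$. It makes the mechanism explicit: injectivity of $W'\to V$ plus the presence of an $H^0$-factor. It also proves slightly more, namely that the components of $\mathrm{d}^1$ at the indices $(1,\sigma)$ alone are already jointly injective.

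The paper's argument, by contrast, stays in intrinsic cosimplicial language and isolates exactly where the hypothesis on $L|K$ enters: Lemma~\ref{LEMMA1} (cf.\ the remark on transcendental extensions) and the splitting. That makes it the more natural starting point for the paper's open question about strictness beyond the Galois case.
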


\begin{proof}
Recall that we have
\[ A^{\bullet}(L|K)_0 = W'_K \otimes_K K[\Delta^{\bullet}] \otimes_K C^{\bullet}(L|K) \qquad A^{\bullet}(L|K)_1 = V_K \otimes_K C^{\bullet}(L|K) \]
which we will denote here just by $A_0^{\bullet}$, and $A_1^{\bullet}$, respectively.
The cosimplicial modules
\[ A_{j_1}^{\bullet} \otimes_{C^{\bullet}(L|K)} \cdots \otimes_{C^{\bullet}(L|K)} A_{j_n}^{\bullet} \]
with $j_1 + \cdots + j_n = j$ and $j_k \in \{0,1\}$ are all contractible for $j < n$ in multiple ways via 
\[ \kappa_l^K: a_1 \otimes \cdots \otimes a_n \mapsto  \sum_{k=1}^i (-1)^{k-1} s_k(a_1) \otimes \cdots \otimes s_{k}(a_{l-1}) \otimes [k] \otimes s_k(a_{l,k}) \otimes s_k(a_{l+1}) \otimes \cdots \otimes   s_k(a_n) \]
Here the $a_{l,k} \in W'_K \otimes_K C^{i}(L|K)$ are determined by $a_l = \sum_{k=0}^i [k] \otimes a_{l,k}$, and
$l$ {\em can be any index such that $j_l = 0$}.
This is because by Lemma~\ref{LEMMACONTRACTDELTA} we have
\[ \kappa_l^K \mathrm{d} +  \mathrm{d} \kappa_l^K = \id,  \]
where $\mathrm{d}$ is the horizontal differential in the double complexes of unnormalized cochains. 
Furthermore, we have
\begin{equation} \label{eqcomm} \kappa_l^K \mathrm{d}_A^{(l')} = \mathrm{d}_A^{(l')} \kappa_{l}^K   \end{equation}
for $l' \not = l$ where $\mathrm{d}_A^{(l')} := 1 \otimes \cdots \otimes 1 \otimes \mathrm{d}_A \otimes 1 \otimes \cdots \otimes 1$ (at position $l'$) because $\mathrm{d}_A$ is a morphism of cosimplicial $C^{\bullet}(L|K)$-modules and thus commutes with all codegeneracies. 
We also have for field homomorphisms $\iota: K \to K'$ that
\[ \iota \kappa_l^K  =  \kappa_l^{K'} \iota,   \]
where $\iota$ denotes, by abuse of notation, the map
\[ A_{j_1}^{\bullet}(L|K) \otimes_{C^{\bullet}(L|K)} \cdots \otimes_{C^{\bullet}(L|K)} A_{j_n}^{\bullet}(L|K)  \to A_{j_1}^{\bullet}(L|K') \otimes_{C^{\bullet}(L|K')} \cdots \otimes_{C^{\bullet}(L|K')} A_{j_n}^{\bullet}(L|K') \]
induced by $\iota$. In particular, $\kappa_l$ maps components of $\widetilde{\ker}^n$ to components of  $\widetilde{\ker}^n$. 

After this preparation we are able to show that 
\begin{equation*} \mathrm{d}^1: E^1_{1, j}(\widetilde{\ker}^n) \to E^1_{2,j}(\widetilde{\ker}^n)   \end{equation*}
is injective. Indeed, let $[y] \in E^1_{1, j}(\widetilde{\ker}^n)$ be represented by an element $y \in \bigoplus  A_{j_1}^1 \otimes \cdots \otimes A_{j_n}^1$ with components $y_{j_1, \dots, j_n}$.
Since $j < n$ by assumption, we can choose $l$ with $j_l = 0$ and write
\[ y_{j_1, \dots, j_n} = \kappa_l^K \mathrm{d} y_{j_1, \dots, j_n}   + \mathrm{d} \underbrace{ \kappa_l^K y_{j_1, \dots, j_n} }_{=0}    \]
(Notice that $\widetilde{\ker}_{0,j} = 0$). 
Now assume that $[y]$ is in the kernel of $\mathrm{d}^1$. Then we have 
\[ \mathrm{d} y_{j_1, \dots, j_n} = \sum_{l'\ |\ j_{l'}=1} \pm \mathrm{d}_A^{(l')} z_{j_1, \dots, j_{l'-1}, 0,  j_{l'+1}, \dots, j_n}. \]
for a suitable collection $z_{j_1', \dots, j_n'} \in A_{j_1'}^2 \otimes \cdots \otimes A_{j_n'}^2$ running over $j_1' + \cdots + j_n' = j-1$.
 Thus for any $j_1, \dots, j_n$ we have using (\ref{eqcomm})
\begin{gather*} y_{j_1, \dots, j_n} = \kappa_l^K \sum_{l'\ |\ j_{l'}=1} \pm \mathrm{d}^{(l')}_A z_{j_1, \dots, j_{l'-1}, 0,  j_{l'+1}, \dots, j_n} \\
 = \sum_{l'\ |\ j_{l'}=1 } \pm \mathrm{d}^{(l')}_A  \kappa_l^K  z_{j_1, \dots, j_{l'-1}, 0,  j_{l'+1}, \dots, j_n} .
 \end{gather*}
Thus by Lemma~\ref{LEMMA1} below there exists $\widetilde{z} \in   (T^n_{C^{1}(L|K)} A^1)_{j-1}$ such that $\mathrm{d}_A \widetilde{z} = y$. This means that $[y]$ is in the kernel of 
\[ E^1_{1, j}(\widetilde{\ker}^n)  \to E^1_{1, j}(T^n_{C^{\bullet}(L|K)} A^{\bullet}(L|K)) \]
which is zero by Lemma~\ref{LEMMASPLIT}.
\end{proof}

\begin{PAR} \label{PARDECOMP} In the proof of Lemma~\ref{KEYLEMMA} the following linear algebra trick was used:
Let $K$ be a field and
let $\gamma: A_0 \to A_1$ be a homomorphism of finite dimensional vector spaces. 
Choosing complements, we have $A_0 = H_0 \oplus A_0'$ and $A_1 = A_1' \oplus H_1$ such that $\gamma$ is induced by an isomorphism $A_0' \cong A_1'$.
Consider $A:=[A_0 \to A_1]$ as a dg-module with $\mathrm{d} = \gamma$ and consider the dg-module $T^n_K A$.
We have by definition
\[ (T^n_K A)_j = \sum_{j = j_1 + \cdots + j_n} A_{j_1} \otimes \cdots \otimes A_{j_n} \]
where $j_k \in \{0,1\}$ for all $k$.
\end{PAR}

\begin{LEMMA}\label{LEMMA1}
Let $R$ be a finite product of fields and
let $\gamma: A_0 \to A_1$ be a homomorphism of finitely generated $R$-modules. 
Consider $A:=[A_0 \to A_1]$ with $\mathrm{d} = \gamma$ as a dg-$R$-module and consider the dg-module $T^n_R A$.
Let $x \in (T^n_R A)_j$ be such that $\mathrm{d} x = 0$. Assume that for each component
\[ x_{j_1, \dots, j_n} \in A_{j_1} \otimes \cdots \otimes A_{j_n} \]
there is $y' \in (T^n_R A)_{j-1}$ such that in $A_{j_1} \otimes \cdots \otimes A_{j_n}$ we have
\begin{equation}\label{eqcond} x_{j_1, \dots, j_n} = (\mathrm{d} y')_{j_1, \dots, j_n} . \end{equation}
Then there exists an $y \in (T^n_R A)_{j-1}$ such that
$x = \mathrm{d} y$.
\end{LEMMA}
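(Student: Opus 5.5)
The plan is to reduce to the case of a field and then compute the cohomology of $T^n_R A$ explicitly, using the decomposition of \ref{PARDECOMP}. Let $e_1, \dots, e_r$ be the primitive idempotents of $R = \prod_\nu K_\nu$. Every $R$-module $M$ splits as $\bigoplus_\nu e_\nu M$, with $e_\nu M$ a $K_\nu$-vector space. Tensor products over $R$ split the same way:
\[ e_\nu(M\otimes_R N) = e_\nu M \otimes_{K_\nu} e_\nu N. \]
So $T^n_R A = \bigoplus_\nu T^n_{K_\nu}(e_\nu A)$ as dg-modules, compatibly with the decomposition into components $A_{j_1}\otimes\cdots\otimes A_{j_n}$. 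Both the hypothesis (\ref{eqcond}) and the conclusion can be checked after multiplying by each $e_\nu$. Hence I may assume that $R=K$ is a field and that $A_0, A_1$ are finite dimensional.

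Over a field, I choose complements as in \ref{PARDECOMP}: $A_0 = H_0 \oplus A_0'$ with $H_0 = \ker\gamma$, and $A_1 = A_1' \oplus H_1$ with $A_1' = \im\gamma$, so that $\gamma$ induces an isomorphism $A_0'\cong A_1'$. Then $A$ splits as a direct sum of dg-modules
\[ A = H_0 \oplus C \oplus H_1, \qquad C = [A_0' \xrightarrow{\sim} A_1'], \]
where $H_0$ and $H_1$ carry zero differential and $C$ is contractible. Expanding $T^n_K A$ by multilinearity gives a sum over words in the three letters $H_0, C, H_1$.

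Every summand containing at least one factor $C$ is contractible, since a tensor product with a contractible complex over a field is contractible. The summands that are pure $H$-words have zero differential. For a pure word $w=(j_1,\dots,j_n)$ with $j_k\in\{0,1\}$, the summand is $H_{j_1}\otimes\cdots\otimes H_{j_n}$, and it sits inside the component $A_{j_1}\otimes\cdots\otimes A_{j_n}$. Let $p_0\colon A_0\to H_0$ be the projection along $A_0'$ and $p_1\colon A_1\to H_1$ the projection along $A_1'$. Then the map from closed elements of degree $j$ to $H^j(T^n_K A)\cong\bigoplus_w H_{j_1}\otimes\cdots\otimes H_{j_n}$ is given by $x\mapsto (\pi_w(x_w))_w$, where $\pi_w = p_{j_1}\otimes\cdots\otimes p_{j_n}$ is applied to the component $x_w$ alone. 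Consequently a closed $x$ is exact if and only if $\pi_w(x_w)=0$ for every word $w$ with $j_1+\cdots+j_n=j$.

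It remains to show $\pi_w\bigl((\mathrm{d}y')_w\bigr) = 0$ for an arbitrary $y'\in(T^n_K A)_{j-1}$. The component $(\mathrm{d}y')_w$ is a signed sum of terms $(1\otimes\cdots\otimes\gamma\otimes\cdots\otimes 1)(y'_{w'})$, with $\gamma$ in some slot $k$ where $j_k=1$. Applying $\pi_w$ to such a term gives $p_1\circ\gamma=0$ in slot $k$, because $p_1$ kills $A_1'=\im\gamma$. Therefore hypothesis (\ref{eqcond}), applied to each component separately (with possibly different $y'$ for different components), gives $\pi_w(x_w)=0$ for all $w$. Since $\mathrm{d}x=0$, the previous paragraph yields $y$ with $x=\mathrm{d}y$.

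The main point of care is the identification of the cohomology projection with the componentwise maps $\pi_w$. This requires that the decomposition of $T^n A$ into contractible summands and pure $H$-words is compatible with the component grading. It is, because each letter lies entirely inside $A_0$ or inside $A_1$, with $C$ straddling both in a way that only affects contractible summands. The Koszul signs play no role here, since they only change terms by a sign.
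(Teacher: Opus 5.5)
Your proposal is correct and follows essentially the paper's proof: reduce to a field, split $A$ using complements $H_0=\ker\gamma$ and $H_1$ of $\operatorname{im}\gamma$, use K\"unneth to see that a closed $x$ is exact iff its parts in the summands $H_{j_1}\otimes\cdots\otimes H_{j_n}$ vanish, and observe that the image of every $1\otimes\cdots\otimes\gamma\otimes\cdots\otimes 1$ has no component in those summands. The only difference is cosmetic: you use the componentwise projections $\pi_w$, whereas the paper first changes $x$ by a boundary, which leaves the componentwise hypothesis (\ref{eqcond}) intact.
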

The point is, that the $y'$ may well depend on $j_1, \dots, j_n$. Obviously, only the components $y'_{j_1', \dots, j_n'}$ such that
$j_l' \le j_l$ for all $l$ matter. 
\begin{proof}
It obviously suffices to see this when $R$ is a field itself. 
By the K\"unneth theorem, we may assume, adding boundaries, that 
\[ x_{j_1, \dots, j_n} \in H_{j_1} \otimes \cdots \otimes H_{j_n} \]
for the $H_k$ chosen in \ref{PARDECOMP}.
Decomposing the $A_{j_1} \otimes \cdots \otimes A_{j_n}$ and $A_{k_1} \otimes \cdots \otimes A_{k_n}$ using the decompositions in \ref{PARDECOMP}, we see that 
the summand $H_{j_1} \otimes \cdots \otimes H_{j_n}$ is not in the image of any $1 \otimes \cdots \otimes 1 \otimes \gamma \otimes 1 \otimes \cdots \otimes 1$. Condition (\ref{eqcond}) thus implies $x_{j_1, \dots, j_n} = 0$. 
\end{proof}
\begin{BEM}
The statement of Lemma~\ref{LEMMA1} would be false for rings $R$ that admit finitely generated modules with non-trivial $\mathrm{Tor}$ groups. In particular, it is false for $C^n(L|K)$ when $n \ge 1$ and $L|K$ is transcendental. 
\end{BEM}

\begin{proof}[Proof of Proposition~\ref{KEYPROP}] \label{KEYPROPPROOF}
For $i=0,1$ there is a commutative diagram 
\[
\xymatrix{
E^{\infty}_{i,n-i}(T^n_{C^{\bullet}(L|K)} A^{\bullet}(L|K))  \ar@{^{(}->}[r] \ar[d] & \cdots \ar@{^{(}->}[r] &  E^2_{i,n-i}(T^n_{C^{\bullet}(L|K)} A^{\bullet}(L|K)) \ar@{^{(}->}[d] \\
E^{\infty}_{i,n-i}(T^n_{L} A^{\bullet}(L))  \ar@{^{(}->}[r] & \cdots \ar@{^{(}->}[r] &   E^2_{i,n-i}(T^n_L A^{\bullet}(L))
}
\]
where all the horizontal maps are injective because there are no differentials $\mathrm{d}^k$ for $k \ge 2$ with target $E^k_{i, n-i}$, and where the right-most vertical map is injective because of the exactness of (\ref{eqe2}) and because Lemma~\ref{KEYLEMMA} implies that $E^{2}_{1, n-1}(\widetilde{\ker}^n) = 0$ (and $E^0_{0,n}(\widetilde{\ker}^n) = 0$ trivially).
Thus also all vertical maps are injective which gives the statement by the discussion in \ref{PARKEYPROPT}.
\end{proof}

\newpage

\bibliographystyle{abbrvnat}
\bibliography{hodge}

\end{document}